\newcommand\xlrsquigarrow[1]{%
\mathrel{%
\begin{tikzpicture}[baseline= {( $ (current bounding box.south) + (0,-0.5ex) $ )}]
  \node[inner sep=.5ex] (a) {$\scriptstyle #1$};
  \path[draw,
  >=stealth,
  <->,
  decorate,
  decoration={zigzag,amplitude=0.7pt,segment length=1.2mm,pre=lineto,pre length=4pt,post length=4pt}] 
  (a.south east) -- (a.south west);
\end{tikzpicture}}%
}
\crefname{section}{Section}{Sections}
\crefname{subsection}{\S}{\S\S}
\crefname{subsubsection}{\S}{\S\S}
\theoremstyle{plain}
\newtheorem{lemma}{Lemma}[section]
\newtheorem{proposition}[lemma]{Proposition}
\newtheorem{corollary}[lemma]{Corollary}
\newtheorem{theorem}[lemma]{Theorem}
\newtheorem{question}[lemma]{Question}
\theoremstyle{plain}
\newtheorem{theoremN}{Theorem}
\theoremstyle{plain}
\newtheorem{definition}[lemma]{Definition}
\newtheorem{example}[lemma]{Example}
\newtheorem{remark}[lemma]{Remark}
\newtheorem{remarks}[lemma]{Remarks}
\newtheorem{notation}[lemma]{Notation}
\crefname{definition}{definition}{definitions}
\crefname{ex}{example}{examples}
\crefname{exs}{example}{examples}
\crefname{remark}{remark}{remarks}
\crefname{remarks}{remark}{remarks}
\crefname{convention}{convention}{conventions}
\crefname{notation}{notation}{notations}
\crefname{table}{table}{tables}
\crefname{lemma}{lemma}{lemmas}
\crefname{proposition}{proposition}{propositions}
\crefname{corollary}{corollary}{corollaries}
\crefname{theorem}{theorem}{theorems}
\crefname{enumi}{}{}
\crefname{question}{question}{Questions}
\crefname{assumption}{assumption}{Assumptions}
\crefname{construction}{construction}{Constructions}
\crefname{equation}{}{}
\numberwithin{equation}{section}
\renewcommand{\theequation}{\thesection-\arabic{equation}}
\theoremstyle{nonumberplain}
\newtheorem{proof}{Proof}
\newcommand\pf[1]{\newtheorem{#1}{Proof of \Cref{#1}}}
\newcommand\bR{{\mathbb R}}
\newcommand\bZ{{\mathbb Z}}
\newcommand\cB{{\mathcal B}}
\newcommand\cF{{\mathcal F}}
\newcommand\cI{{\mathcal I}}
\newcommand\cJ{{\mathcal J}}
\newcommand\cK{{\mathcal K}}
\newcommand\cL{{\mathcal L}}
\newcommand\cP{{\mathcal P}}
\newcommand\numberthis{\addtocounter{equation}{1}\tag{\theequation}}
\newcommand{\qedhere}{\mbox{}\hfill\ensuremath{\blacksquare}}
\renewcommand{\square}{\mathrel{\Box}}
\title{Rich lattices of multiplier topologies}
\author{Alexandru Chirvasitu}
\begin{document}

\date{}

\newcommand{\Addresses}{{
  \bigskip
  \footnotesize

  \textsc{Department of Mathematics, University at Buffalo}
  \par\nopagebreak
  \textsc{Buffalo, NY 14260-2900, USA}  
  \par\nopagebreak
  \textit{E-mail address}: \texttt{achirvas@buffalo.edu}


}}

\maketitle

\begin{abstract}
  Each symmetrically-normed ideal $\mathcal{I}$ of compact operators on a Hilbert space $H$ induces a multiplier topology $\mu^*_{\mathcal{I}}$ on the algebra $\mathcal{B}(H)$ of bounded operators. We show that under fairly reasonable circumstances those topologies precisely reflect, strength-wise, the inclusion relations between the corresponding ideals, including the fact that the topologies are distinct when the ideals are.

  Said circumstances apply, for instance, for the two-parameter chain of Lorentz ideals $\mathcal{L}^{p,q}$ interpolating between the ideals of trace-class and compact operators. This gives a totally ordered chain of distinct topologies $\mu^*_{p,q\mid 0}$ on $\mathcal{B}(H)$, with $\mu^*_{2,2\mid 0}$ being the $\sigma$-strong$^*$ topology and $\mu^*_{\infty,\infty\mid 0}$ the strict/Mackey topology. In particular, the latter are only two of a natural continuous family. 
\end{abstract}

\noindent {\em Key words: compact operator; Schatten ideal; trace; Dixmier trace; Lorentz ideal; locally convex; multiplier; Mackey topology; quasi-norm; symmetrically-normed; symmetric norming function; characteristic numbers; interpolation}

\vspace{.5cm}

\noindent{MSC 2020: 47B10; 47L20; 46E30; 46A03; 46A17; 46H10; 16D25; 47B02; 47B07; 46B70}


\section*{Introduction}

The present paper was prompted by a mild puzzlement resulting from perusing some of the operator-algebra literature. Specifically, there seems to me to be a slightly misleading nomenclature conflation among several different locally convex topologies on $\cB(H)$ (bounded operators on a Hilbert space $H$), potentially confusing if, as is likely, one cross-references several sources:

\begin{itemize}
\item \cite[\S II.2]{tak1} discusses the $\sigma$-weak, $\sigma$-strong and $\sigma$-strong$^*$ and describes them in terms of seminorms involving the action of $\cB(H)$ on $H$.
\item \cite[\S I.3.1]{dixw} discusses the above-mentioned $\sigma$-weak and $\sigma$-strong topologies under different names: `ultra-weak' and `ultra-strong' respectively.
 
\item \cite[\S\S I.3.1.3 and I.3.1.6]{blk_oa} seem to suggest the terms `$\sigma$-weak', `$\sigma$-strong' and `$\sigma$-strong$^*$' have the same meaning as in \cite[\S II.2]{tak1} (and the alternative prefix `ultra' is explicitly mentioned, thus also connecting back to \cite[\S I.3.1]{dixw}), but the concrete descriptions given later on at least appear different:
  \begin{itemize}
  \item \cite[\S I.8.6.3]{blk_oa} defines the $\sigma$-strong topology as that of left multipliers on the ideal $\cK(H)\le \cB(H)$ of compact operators;
  \item while the $\sigma$-strong$^*$ topology is similarly defined as that of left and right multipliers. 
  \end{itemize}
\end{itemize}

This, of course, will later allow the identification \cite[\S II.7.3.1]{blk_oa} of the $\sigma$-strong$^*$ topology on $\cB(H)$ as defined in that book with the strict topology on the multiplier algebra
\begin{equation*}
  M(\cK(H))\cong \cB(H)
\end{equation*}
\cite[Example II.7.3.12]{blk_oa}.

This is where the apparent contradiction appears:
\begin{itemize}
\item Being a strict topology on a multiplier algebra
  \begin{equation*}
    M(\cK(H))\cong \cK(H)^{**},
  \end{equation*}
  the $\sigma$-strong$^*$ topology of \cite{blk_oa} must be {\it Mackey} \cite[Corollary 2.8]{tay_strict} (i.e. the strongest locally convex topology on $\cB(H)$ for which the predual $\cB(H)_*$ is the continuous dual).
\item On the other hand, \cite{yead_mack} shows that for infinite-dimensional $H$ the $\sigma$-strong$^*$ topology as in the {\it other} sources \cite{dixw,tak1} is {\it not} Mackey.
\end{itemize}

What seems to be happening is that despite the somewhat confusing terminological coincidence, the $\sigma$-strong$^*$ topology of \cite[paragraph following Proposition I.8.6.3]{blk_oa} is {\it not} that of \cite[Definition II.2.3]{tak1}, but rather strictly stronger (see \Cref{ex:yeadex}). Furthermore, the same goes for the two respective $\sigma$-strong topologies:

\begin{itemize}
\item The $\sigma$-strong topology of \cite[Proposition I.8.6.3]{blk_oa} is that induced by the seminorms
  \begin{equation*}
    \cB(H)\ni T\xmapsto{\quad}\|TK\|,\quad K\text{ a compact operator on }H.
  \end{equation*}
  In other words, it is the topology of left multipliers on the ideal of compact operators.
\item On the other hand, the $\sigma$-strong topology of \cite[Definition II.2.2]{tak1} (equivalently, the {\it ultra-}strong topology of \cite[\S I.3.1]{dixw}) is that induced by the seminorms
  \begin{equation*}
    \cB(H)\ni T\xmapsto{\quad}\sum_{i}\|T\xi_n\|^2\quad\text{ for $\xi_n\in H$ with }\sum_n\|\xi_n\|^2<\infty.
  \end{equation*}
  Now, assuming $\sum\|\xi_n\|^2<\infty$ ensures that
  \begin{equation*}
    e_n\xmapsto{\quad}\xi_n,\quad \text{orthonormal }(e_n)
  \end{equation*}
  extends to a {\it Hilbert-Schmidt} operator (\cite[\S I.6.6]{dixw}, \cite[Definition I.8.5.3]{blk_oa}) $K$ on $H$. Because for any orthonormal basis $(e_i)$ of $H$
  \begin{equation}\label{eq:hsnorm}
    \cL^2(H)\ni K\xmapsto{\quad} \|K\|_2:=\left(\sum_i \|Ke_i\|^2 \right)^{\frac 12}
  \end{equation}
  is a Hilbert-space norm on the ideal of Hilbert-Schmidt operators (\cite[\S I.6.6, Corollary to Theorem 5]{dixw}, \cite[\S I.8.5.5]{blk_oa}), the $\sigma$-strong topology presently being discussed (of \cite{dixw,tak1}) is that on left multipliers on the Hilbert-Schmidt ideal $\cL^2(H)$ topologized with its Hilbert-Schmidt norm \Cref{eq:hsnorm}.
\end{itemize}

Similarly:

\begin{itemize}
\item The $\sigma$-strong$^*$ topology of \cite[discussion following Proposition I.8.6.3]{blk_oa} is that of left and right multipliers on the norm-closed ideal $\cK(H)\trianglelefteq \cB(H)$ of compact operators.

\item While that of \cite[Definition II.2.3]{tak1} is the one on left and right multipliers on the Hilbert-Schmidt ideal $\cL^2(H)$.
\end{itemize}

Given all of this, together with the ubiquity of the handful of topologies typically discussed in most sources (see e.g. the reference to {\it the} eight vector-space topologies on $\cB(H)$ in \cite[\S 2.1.7]{ped_aut}), it seems worthwhile to place those topologies within some surrounding scaffolding. 

How one might go about this is already sketched above: every ideal $\cI\trianglelefteq \cB:=\cB(H)$, possibly non-norm-closed, will induce at least {\it two} topologies on $\cB$ provided it ($\cI$, that is) has been topologized in some sensible fashion: one can regard operators either as {\it left} multipliers on $\cI$, or as {\it two-sided} multipliers. The two resulting topologies will be denoted below (\Cref{def:mui}) by $\mu_{\cI}$ and $\mu^*_{\cI}$ respectively (there is a {\it right}-multiplier version, but it offers no interesting conceptual distinction to $\mu_{\cI}$).

There is a rich theory of and extensive literature on what it means for ideals $\cI\trianglelefteq \cB$ to be ``topologized in some sensible fashion'' (the operative notion being that of a {\it symmetrically normed} ideal: \cite[\S III.2]{gk_lin} and \Cref{def:sn} \Cref{item:snideal}), but the central family of examples is perhaps the chain of {\it Schatten ideals}
\begin{equation*}
  \cL^p=\cL^p(H),\ 1\le p\le \infty\quad \text{of}\quad\text{\cite[\S I.8.7.3]{blk_oa}},
\end{equation*}
consisting, respectively, of those compact operators $T$ for which the eigenvalues of the absolute value $|T|:=(T^*T)^{1/2}$ constitute an $\ell^p$-sequence.

These are naturally topologized (\Cref{def:schatp}) by norms $\|\cdot\|_p$, decreasing with $p$, and contain each other increasingly in $p$. It will also be convenient to write $\cL^{\infty}_0$ for $\cK$ itself (compact operators) and $\cL^p_0=\cL^p$ for $p<\infty$, for some notation uniformity. With all of this in place, we can state a sampling of the material below (a specialization of \Cref{th:lpqmultops}):

\begin{theoremN}
  Let $H$ be an infinite-dimensional Hilbert space, $\cL_0^p:=\cL_0^p(H)$ the corresponding $p$-Schatten ideals for $1\le p\le\infty$, and $\mu_{p\mid 0}$ and $\mu_{p\mid 0}^*$ the locally convex topologies induced on the algebra $\cB(H)$ of all bounded operators regarded, respectively, as either left multipliers or bilateral multipliers on $\cL_0^p$.
  \begin{enumerate}[(1)]
    
  \item The topologies $\mu_{p\mid 0}$ are ordered strictly increasingly in $p$ by strength; the same goes for the topologies $\mu^*_{p\mid 0}$.
    
    In fact,
    \begin{equation*}
      p \lneq q
      \xRightarrow{\quad}
      \mu_{q\mid 0}
      \npreceq
      \mu^*_{p\mid 0}.
    \end{equation*}

  \item All $\mu_{p\mid 0}$ coincide on bounded subsets of $\cB(H)$, as do all $\mu_{p\mid 0}^*$.

  \item $\mu_{p\mid 0}$ and $\mu_{p\mid 0}^*$ all have the same (norm-)bounded sets and the same (weak$^*$-)continuous functionals.  \qedhere
  \end{enumerate}
\end{theoremN}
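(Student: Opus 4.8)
The three parts are layered, so the plan is to settle the ordering in (1) first and then read off (2) and (3) from the same seminorm estimates.

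\textbf{Ordering.} For the non-strict inequalities $\mu_{p\mid 0}\preceq\mu_{q\mid 0}$ and $\mu^*_{p\mid 0}\preceq\mu^*_{q\mid 0}$ with $p\le q$, I would show that each generating seminorm of the weaker topology is dominated by a \emph{single} seminorm of the stronger one. Given $K\in\cL^p$ with polar decomposition $K=U|K|$, factor $K=K'N$ with $K'=U|K|^{p/q}\in\cL^q$ and $N=|K|^{1-p/q}\in\cL^s$, where $\tfrac1s=\tfrac1p-\tfrac1q$; the generalized Hölder inequality for Schatten norms then gives $\|TK\|_p\le\|N\|_s\,\|TK'\|_q$, exactly the required domination. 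The bilateral case is identical after the symmetric split $K=N'K''$ with $K''=|K|^{p/q}\in\cL^q$ on the other side. The compact endpoint $q=\infty$ is the one place where this naive factorization degenerates (the complementary exponent forces the remaining factor back into $\cL^p$), and there I would invoke the comparison of symmetric norms established in \Cref{th:lpqmultops}.

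\textbf{Separation.} The substance of (1) is the strict statement, which I would prove in the sharp form: for $p\lneq q$ and any fixed $K_0\in\cL^q\setminus\cL^p$, the seminorm $T\mapsto\|TK_0\|_q$ is \emph{not} $\mu^*_{p\mid 0}$-continuous. The key observation is that the offending seminorm need only be defeated against one finite family at a time, with $T$ chosen afterwards; so given any finite $K_1,\dots,K_n\in\cL^p$ and any $C$, it suffices to produce a single rank-one $T=\lambda\,|x\rangle\langle y|$ with $\|TK_i\|_p\le 1$, $\|K_iT\|_p\le 1$ for all $i$, yet $\|TK_0\|_q\ge C$. Since $\|TK_i\|_p=\lambda\|K_i^*y\|$, $\|K_iT\|_p=\lambda\|K_ix\|$ and $\|TK_0\|_q=\lambda\|K_0^*y\|$, I would choose $x$ in the infinite-dimensional orthocomplement of the dominant singular directions of the $K_i$ so that $\max_i\|K_ix\|$ is negligible, then find a unit vector $y$ making $\|K_0^*y\|$ large relative to $\max_i\|K_i^*y\|$, and scale $\lambda$ accordingly. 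Such a $y$ must exist because otherwise $\|K_0^*y\|\le A\max_i\|K_i^*y\|$ for all $y$ would force the operator inequality $K_0K_0^*\le A^2\sum_i K_iK_i^*$; by Weyl monotonicity $s_m(K_0)^2\le A^2\lambda_m(\sum_iK_iK_i^*)$, and since $\sum_iK_iK_i^*\in\cL^{p/2}$ this would yield $K_0\in\cL^p$, a contradiction. This gives $\mu_{q\mid 0}\npreceq\mu^*_{p\mid 0}$ (uniformly in $q$, including $q=\infty$), and strictness of both chains follows formally from $\mu_{p\mid 0}\preceq\mu^*_{p\mid 0}\preceq\mu^*_{q\mid 0}$ together with $\mu_{q\mid 0}\preceq\mu^*_{q\mid 0}$.

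\textbf{Bounded behaviour (parts (2) and (3)).} On a norm-ball $\{\|T\|\le R\}$ every $\mu_{p\mid 0}$ collapses to the strong operator topology and every $\mu^*_{p\mid 0}$ to the strong-$*$ topology, both independent of $p$. One inclusion is free: a rank-one $K=|\zeta\rangle\langle\eta|$ gives $\|TK\|_p=\|\eta\|\,\|T\zeta\|$, so SOT $\preceq\mu_{p\mid 0}$ always; for the converse on the ball, approximate $K\in\cL^p$ by a finite-rank $F$ in $\|\cdot\|_p$, whence $\|TK\|_p\le R\|K-F\|_p+\|TF\|_p$ uniformly in $\|T\|\le R$ and $T\mapsto\|TF\|_p$ is SOT-continuous (it depends on finitely many $T\zeta_k$). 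For (3), the $\mu_{p\mid 0}$- and $\mu^*_{p\mid 0}$-bounded sets are the norm-bounded sets by Banach--Steinhaus, since left multiplication $L_T\colon\cL^p\to\cL^p$ has norm $\|T\|$ (test on $K=|x\rangle\langle x|$ with $\|Tx\|$ near $\|T\|$), so pointwise boundedness of $\{\|TK\|_p\}$ upgrades to uniform boundedness of $\{\|T\|\}$. Finally each continuous dual equals the predual $\cL^1$: the inclusion $\cL^1\subseteq(\mu_{p\mid 0})^*$ comes from factoring $S\in\cL^1$ as $S=AB$ with $A=U|S|^{1/p}\in\cL^p$ and $B=|S|^{1/p'}\in\cL^{p'}$, so that $|\operatorname{Tr}(ST)|=|\operatorname{Tr}(BTA)|\le\|B\|_{p'}\|TA\|_p$ is controlled by a $\mu_{p\mid 0}$-seminorm; the reverse inclusion uses (2), since a $\mu_{p\mid 0}$-continuous functional is SOT-continuous on the unit ball, hence normal, hence trace-class.

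\textbf{Main obstacle.} The only genuinely substantive step is the strict separation in (1); everything else is bookkeeping around Hölder's inequality and finite-rank approximation. The crux is recognizing the correct quantifier order (one finite family, then $T$), which makes a single rank-one operator enough, and identifying the precise obstruction to the vector $y$ as an operator inequality that collapses $K_0$ into $\cL^p$ via Weyl monotonicity. The secondary subtlety is the compact-operator endpoint $q=\infty$ of the ordering, where the Schatten factorization degenerates and one must fall back on the general symmetric-norm comparison of \Cref{th:lpqmultops}.
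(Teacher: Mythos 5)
Your proof is correct in all essentials, but the heart of it --- the strict separation --- goes by a genuinely different route than the paper's. The paper deduces the theorem from general machinery for symmetrically-normed ideals: the non-strict ordering comes from \Cref{pr:ifprod} together with the factorization $\cL^p=\cL^q(\cL^p:\cL^q)$ (your H\"older splitting $K=K'N$ is exactly the Schatten instance of this, and your troublesome endpoint $q=\infty$ is handled there by \Cref{th:multcl}, i.e.\ $\cK\cdot\cL^p=\cL^p$ via the standard ``divide out a slowly decaying compact factor'' trick --- your citation of \Cref{th:lpqmultops} at that point is circular and should be replaced by this); the separation is \Cref{th:yead}, which takes a sequence $(\mu_n)\in\ell^q\setminus\ell^p$, a family of mutually orthogonal projections $E_n$, and shows $\{\mu_n^{-1}E_n\}$ clusters at $0$ for $\mu^*_{p\mid 0}$ but not for $\mu_{q\mid 0}$, using the Gohberg--Kre\u{\i}n characterization of membership in an sn-ideal by domination of characteristic numbers. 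Your argument instead refutes continuity of a single generating seminorm $T\mapsto\|TK_0\|_q$ by rank-one test operators, with the existence of the vector $y$ secured by an operator inequality $K_0K_0^*\le A^2\sum_iK_iK_i^*$ and Weyl monotonicity. This is a clean and correct alternative; what it costs is generality --- the paper's \Cref{th:yead} applies verbatim to arbitrary approximable sn-ideals (hence to the Lorentz ideals $\cL^{p,q}$ of the full \Cref{th:lpqmultops}), whereas your step ``$\sum_iK_iK_i^*\in\cL^{p/2}$ implies $s_m(K_0)\in\ell^p$'' leans on the specific power structure of the Schatten classes (and, for $p<2$, on the fact that $\cL^{p/2}$ is still an ideal even though only quasi-normed --- worth a sentence). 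Your treatment of parts (2) and (3) matches the paper's \Cref{th:topssamebdd} in substance, except that for the inclusion ``dual $\subseteq\cL^1$'' the paper sandwiches the topologies between weak$^*$ and Mackey and invokes Mackey--Arens, while you use the normality criterion for functionals continuous on the unit ball; both are standard and correct, and your $p=\infty$ endpoint of the factorization $S=AB$ again needs \Cref{th:multcl} rather than the degenerate $|S|^{0}$.
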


This gives the sought-after context for the above discussion (\Cref{re:howprevtops}): $\mu_{2\mid 0}$ and $\mu^0_{2\mid 0}$ are the $\sigma$-strong and $\sigma$-strong$^*$ topologies of \cite[\S II.2]{tak1}, $\mu_{\infty\mid 0}$ is the strict/Mackey topology, and these fit into an entire chain (or rather two) of topologies, ordered {\it strictly} by $1\le p\le\infty$. Or: the familiar topologies are a few instances of a natural construction, giving a wealth of distinct topologies on $\cB(H)$.  

The Schatten ideals were here meant only as an illustration and entry point to the broader discussion. \Cref{th:lpqmultops} itself is concerned with the larger, two-parameter chain of {\it Lorentz} ideals $\cL^{p,q}$ of \cite[\S 4.11]{dfww_comm} and \cite[\S $4.2.\alpha$]{Con94} and recalled in \Cref{def:lpq}. {\it They}, in turn, are also intended to only exemplify broader phenomena.

Recall \cite[Definition 14.1]{trev_tvs} the general notion of a {\it bounded set} in a locally convex topological vector space: one absorbed by any sufficiently large scaling of any given origin neighborhood. Aggregating \Cref{pr:ifprod} and \Cref{th:topssamebdd,th:yead}, for instance, we have

\begin{theoremN}
  Let $H$ be a Hilbert space, $\cB:=\cB(H)$ and $\cK:=\cK(H)$ the ideal of compact operators.

  \begin{enumerate}[(1)]

  \item For any symmetrically-normed ideal $(\cI,\ \vvvert\cdot\vvvert)$ the left and bilateral multiplier topologies $\mu_{\cI}$ and $\mu^*_{\cI}$ all have the same bounded sets, namely the norm-bounded ones.

  \item If $\cI$ is additionally {\it approximable} (\Cref{def:schatp} \Cref{item:snapprox}) in the sense that the finite-rank operators are $\vvvert\cdot\vvvert$-dense in $\cI$, then
    \begin{itemize}
    \item $\mu_{\cI}$ coincides on bounded sets with the $\sigma$-strong topology $\mu_2$;
    \item and similarly, $\mu^*_{\cI}$ coincides on bounded sets with the $\sigma$-strong$^*$ topology $\mu^*_2$. 
    \end{itemize}

  \item If $\cI$ is approximable and also satisfies 
    \begin{equation*}
      \cL^1 = \cI (\cL^1:\cI)
    \end{equation*}
    for the ideal quotient
    \begin{equation*}
      (\cL^1:\cI):=\left\{x\in \cB\ |\ x\cI\subseteq \cL^1\right\}
    \end{equation*}
    then $\mu_{\cI}$ and $\mu_{\cI}^*$ have the same continuous dual $\cL^1\cong \cB_*$.

  \item Consider two approximable symmetrically-normed ideals $\cI, \cJ$ contained in $\cK$ such that
    \begin{equation*}
      \cI\subseteq \cJ(\cI:\cJ).
    \end{equation*}
    The left-multiplier topology $\mu_{\cI}$ is then weaker than $\mu_{\cJ}$, and similarly for the bilateral-multiplier topologies.

  \item For two approximable symmetrically-normed $\cI\not\subseteq \cJ$ contained in $\cK$ the left-multiplier topology $\mu_{\cI}$ is not dominated by the bilateral-multiplier topology $\mu^*_{\cJ}$.  \qedhere
  \end{enumerate}
\end{theoremN}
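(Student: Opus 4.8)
The plan is to disprove domination directly at the level of seminorms. Recall that $\mu_{\cI}$ is generated by the seminorms $T\mapsto\vvvert TK\vvvert_{\cI}$, $K\in\cI$, and $\mu^*_{\cJ}$ by $T\mapsto\max(\vvvert TL\vvvert_{\cJ},\vvvert LT\vvvert_{\cJ})$, $L\in\cJ$. Thus $\mu_{\cI}\preceq\mu^*_{\cJ}$ would mean that for every $K\in\cI$ there are $L_1,\dots,L_m\in\cJ$ and $C>0$ with
\[
  \vvvert TK\vvvert_{\cI}\ \le\ C\max_{1\le j\le m}\max\bigl(\vvvert TL_j\vvvert_{\cJ},\,\vvvert L_jT\vvvert_{\cJ}\bigr),\qquad T\in\cB .
\]
I would fix one well-chosen $K\in\cI$ and refute this for all finite families and constants, equivalently produce a net $(T_s)_s$, indexed by pairs $s=(F,\delta)$ with $F\subseteq\cJ$ finite and $\delta>0$ (directed by reverse inclusion and $\delta\downarrow 0$), such that $\vvvert T_sL\vvvert_{\cJ},\vvvert LT_s\vvvert_{\cJ}<\delta$ for every $L\in F$ while $\vvvert T_sK\vvvert_{\cI}\ge 1$. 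Such a net converges to $0$ in $\mu^*_{\cJ}$ but not in $\mu_{\cI}$, which is exactly the failure $\mu_{\cI}\npreceq\mu^*_{\cJ}$.

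A guiding observation, which also pins down where the obstruction must live, is that any witnessing net is necessarily unbounded in operator norm. Indeed, by part (1) the $\mu^*_{\cJ}$-bounded sets are precisely the norm-bounded ones, and by part (2) on a norm-bounded set $\mu^*_{\cJ}$ restricts to the $\sigma$-strong$^*$ topology $\mu^*_2$ and $\mu_{\cI}$ to the weaker $\sigma$-strong topology $\mu_2$. Hence on bounded sets $\mu^*_{\cJ}$ already dominates $\mu_{\cI}$, so a $\mu^*_{\cJ}$-null, norm-bounded net is automatically $\mu_{\cI}$-null; the $T_s$ must therefore have $\|T_s\|\to\infty$. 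In particular no sequence can detect the difference (convergent sequences are bounded), which is why the argument is genuinely one ``at infinity'' and forces the amplified construction below.

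To build the net I would use that symmetrically-normed ideals are solid under singular-value domination \cite[\S III]{gk_lin}: from $\cI\not\subseteq\cJ$ one gets a positive $A=\sum_n s_n\langle\,\cdot\,,e_n\rangle e_n\in\cI\setminus\cJ$ with $s_n\downarrow 0$ along an orthonormal system $(e_n)$, and I take $K:=A$. Because $A\notin\cJ$, its truncations $A_N=\sum_{n\le N}s_n\langle\,\cdot\,,e_n\rangle e_n$ satisfy $\vvvert A_N\vvvert_{\cJ}\to\infty$ whereas $\vvvert A_N\vvvert_{\cI}\le\vvvert A\vvvert_{\cI}$ by solidity; and because each $L\in F$ is compact, its coefficients decay, $\|Le_n\|,\|L^*e_n\|\to 0$. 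Given $s=(F,\delta)$ I would then pass to a high spectral window of $A$ (indices $n$ past a threshold depending on $F$ and $\delta$), on which every $L\in F$ is uniformly negligible, and take $T_s$ to be an amplified operator supported on that window, normalised so that $T_sA$ has $\cI$-norm at least $1$. The windowing renders $\vvvert T_sL\vvvert_{\cJ},\vvvert LT_s\vvvert_{\cJ}$ small, while the normalisation (hence amplification) keeps the $\cI$-norm bounded below; this is the source of $\|T_s\|\to\infty$.

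The heart of the matter, and the step I expect to be the main obstacle, is the quantitative balance just sketched: one must amplify $T_s$ enough that $\vvvert T_sA\vvvert_{\cI}\ge 1$ yet keep the bilateral $\cJ$-seminorms below $\delta$, and these requirements pull against each other. The delicate point is that the crude estimates—bounding $\vvvert T_sL\vvvert_{\cJ}$ by $\|T_s\|$ times a vanishing tail, or summing rank-one contributions—lose a power and only exploit $\cI\not\subseteq\cJ^{(2)}$ (the convexified ideal), not the hypothesis $\cI\not\subseteq\cJ$ itself; indeed a single amplified eigendirection succeeds exactly when the adversary's family cannot dominate the $s_n$ after squaring. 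To use $\cI\not\subseteq\cJ$ sharply one must instead spread the amplification across the whole window and compute the $\cJ$-seminorms of the resulting block operator essentially exactly, invoking solidity together with a Schur--Horn majorization (the diagonal of $L^*L$ is majorized by its eigenvalues) to show that no finite subfamily of $\cJ$ can dominate the singular values of $A$ along arbitrarily high windows—else solidity would force $A\in\cJ$, a contradiction. Carrying out this sharp estimate in the borderline regime $\cJ\subsetneq\cI\subseteq\cJ^{(2)}$ is the technical crux; with it in hand the net is assembled and the proof closes.
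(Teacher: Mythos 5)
Your proposal addresses only part (5) of the statement (parts (1)--(4) are invoked but not proved; in the paper they are \Cref{th:topssamebdd} and \Cref{pr:ifprod}), and within part (5) it stops exactly at the decisive step. The reduction is sound: a witnessing net must be norm-unbounded, and one should look for operators supported far out along an element of $\cI\setminus\cJ$. But the ``quantitative balance'' you flag as the technical crux --- amplifying $T_s$ on a high spectral window so that $\vvvert T_sA\vvvert_{\cI}\ge 1$ while all the bilateral $\cJ$-seminorms over a finite family drop below $\delta$ --- is precisely the content of the theorem, and you neither construct the $T_s$ nor prove the two competing estimates; you explicitly leave it open. As written this is a genuine gap, not a proof.

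The paper's proof of this part (\Cref{th:yead}) dissolves the difficulty by arguing the $\cJ$-side \emph{by contradiction} rather than by a direct window estimate. Using \Cref{re:salinas}, pick a non-increasing positive null sequence $(\mu_n)$ in the domain of $\Phi_{\cI}$ but not of $\Phi_{\cJ}$, fix mutually orthogonal non-zero projections $E_n$, and consider the unbounded family $\bigl\{\mu_n^{-1}E_n\bigr\}_n$. If for some $T\in\cJ$ one had $\vvvert \mu_n^{-1}TE_n\vvvert_{\cJ}\ge C>0$ for all large $n$, then $T\sum_{n\ge n_0}E_n\in\cJ$ would have characteristic numbers weakly dominating $C\mu_n$, forcing $(\mu_n)$ into the domain of $\Phi_{\cJ}$ by \cite[\S III.4, Theorem 4.3]{gk_lin} --- a contradiction. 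So no lower bound can persist, and $0$ is a $\mu^*_{\cJ}$-cluster point of the family with no estimate of $\vvvert TE_n\vvvert_{\cJ}$ ever computed. (The majorization of the diagonal of $T^*T$ by its eigenvalues that you anticipate needing is exactly what is hiding in that citation, but it is used once, inside the contradiction, not in a delicate normalisation.) The failure of $\mu_{\cI}$-clustering is then witnessed by the single diagonal operator $D\in\cI$ with singular values $\mu_n$, since $\|\mu_n^{-1}E_nD\|\ge 1$ for all $n$. If you want to salvage your direct construction, this contradiction argument is the missing ingredient: it replaces the sharp two-sided estimate you could not close, and it only uses $\cI\not\subseteq\cJ$, with no detour through the squared ideal or a borderline regime.
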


This gives, in words, some fairly generic machinery for producing distinct multiplier topologies $\mu^*_{\cI}$ (or $\mu_{\cI}$) on $\cB(H)$, whose underlying inclusion ordering {\it precisely} mimics that of the symmetrically-normed ideals $\cI$ involved in the construction.

\subsection*{Some common notation}

The symbol `$\preceq$' placed between topologies means `is coarser than'; its opposite `$\succeq$', naturally, means `is finer than'. We write $H$, $\cB(H)$, $\cK(H)$ and $\cF(H)$ for Hilbert spaces and their corresponding (possibly non-unital) rings of bounded, compact and finite-rank operators respectively.

\subsection*{Acknowledgements}

I am grateful for many insightful, informative comments and pointers to the literature by K. Davidson, A. Freslon, A. Skalski and P. Skoufranis.

This work is partially supported by NSF grant DMS-2001128.

\section{Multiplier topologies attached to ideals of $\cB(H)$}\label{se:puzz}

To distinguish between the two competing pictures of the $\sigma$-strong topology recalled above, we adorn the stronger topologies of \cite{blk_oa} with a `k' prefix, meant as suggestive of `compact': k-$\sigma$-strong and k-$\sigma$-strong$^*$. Similarly, we prepend an `hs' (for `Hilbert-Schmidt') to the corresponding topologies of \cite{dixw,tak1}: hs-$\sigma$-strong and hs-$\sigma$-strong$^*$. In either case, there is no a priori reason why
\begin{equation*}
  \text{k-$\sigma$-strong}=\text{hs-$\sigma$-strong}
  \quad\text{or}\quad
  \text{k-$\sigma$-strong$^*$}=\text{hs-$\sigma$-strong$^*$}.
\end{equation*}
As a matter of fact, the example that proves the main result of \cite{yead_mack} can be used directly to show that indeed the equalities do not hold (when $H$ is infinite-dimensional).


\begin{example}\label{ex:yeadex}
  Consider a sequence $(E_n)$ of mutually orthogonal non-zero projections on $H$. As argued in \cite{yead_mack}, the set
  \begin{equation*}
    \left\{n^{\frac 12}E_n\right\}_{n\in \bZ_{>0}},\quad E_n\in \cB(H)\text{ mutually orthogonal projections on }H
  \end{equation*}
  has $0\in  \cB(H)$ as a cluster point in the hs-$\sigma$-strong$^*$ topology of \cite[\S I.3.1]{dixw}. On the other hand, if
  \begin{equation}\label{eq:xin}
    \xi_n\in\mathrm{range}(E_n),\quad \|\xi_n\|=1,
  \end{equation}
  then
  \begin{itemize}
  \item the operator
    \begin{equation*}
      \xi_n\xmapsto{\quad}\frac 1{n^{\frac 12}}\xi_n
    \end{equation*}
    extends to a compact operator $K$ on $H$;
  \item and because
    \begin{equation*}
      E_nK \xi_n = \xi_n\Longrightarrow \|E_nK\| = 1,\ \forall n,
    \end{equation*}
    the sequence $(E_n)$ plainly does not have any subnets converging to $0$ in the k-$\sigma$-strong topology (let alone k-$\sigma$-strong$^*$). 
  \end{itemize}
\end{example}

The discussion extends considerably to provide a tower of multiplier topologies on $\cB(H)$. To place the discussion in its proper context, potentially allowing for some future extensions, we need a brief detour recalling some of the background on (possibly non-norm-closed) ideals of operators on a Hilbert space $H$. Sources for that material include \cite[Chapter III]{gk_lin}, \cite[\S 4.2 and Appendix C]{Con94}, \cite[\S\S 1.7-1.9 and Chapter 2]{sim_tr} and the briefer discussion in \cite[\S\S I.8.7.3-6]{blk_oa}.

First, following \cite[\S $4.2.\alpha$]{Con94}:

\begin{notation}\label{not:musigma}
  Let $T\in \cK(H)$ be a compact operator on a Hilbert space $H$.
  \begin{itemize}
  \item We denote by
    \begin{equation*}
      \mu_0(T)\ge\mu_1(T)\ge \cdots \ge 0
    \end{equation*}
    the {\it characteristic numbers} of $T$: the re-ordered sequence of eigenvalues of $|T|:=(T^*T)^{\frac 12}$, including multiplicities for positive eigenvalues, and padding the sequence with trailing in 0s if only finitely many eigenvalues are strictly positive.

    Observe that $H$ might be uncountably-dimensional, so the sequence $(\mu_n(T))_n$ does not quite record the multiplicity of the eigenvalue 0 accurately; this will not be an issue.

  \item $\mu(T)$ or $(\mu(T))$ or $(\mu(T))_n$ denotes the entire sequence. 

  \item Similarly, write
    \begin{equation*}
      \sigma_n(T):=\sum_{i=0}^{n-1}\mu_i(T)
    \end{equation*}
    and $\sigma(T)$ or $(\sigma(T))$, etc.
  \end{itemize}  
\end{notation}

\begin{remark}
  The $\mu_n(T)$ are the $s_j(T)$ of \cite[\S II.7 1.]{gk_lin}.   
\end{remark}

Recall, next, the {\it Schatten ideals} $\cL^p(H)\le \cB(H)$ of \cite[\S I.8.7.3]{blk_oa} (also discussed extensively in \cite[\S III.7]{gk_lin}):

\begin{definition}\label{def:schatp}
  For a Hilbert space $H$ and $1\le p< \infty$ the space $\cL^p(H)\le \cB(H)$ (or $\cL^p$, when $H$ is understood) of {\it $p$-summable} operators consists of those $T\in \cB(H)$ for which $|T|^p$ is trace-class \cite[Definition I.8.5.3]{blk_oa}:
  \begin{equation*}
    \mathrm{tr}~|T|^p
    :=
    \sum_{n\ge 0}\mu_n(|T|^p)
    =
    \sum_{n\ge 0}\mu_n(T)^p<\infty.
  \end{equation*}
  $\cL^p(H)$ is an ideal in $\cB(H)$, contained in the ideal $\cK(H)$ of compact operators, and is a Banach space under the {\it $p$-norm} $\|\cdot\|_p$ defined by 
  \begin{equation*}
    \|T\|_p^p:=\mathrm{tr}~|T|^p. 
  \end{equation*}
  Set also $\cL^{\infty}(H):=\cB(H)$ and $\cL_0^{\infty}(H):=\cK(H)$; in that case $\|\cdot\|_{\infty}$ is just the usual norm inherited from $\cB(H)$.

  It will also occasionally be convenient (and an instance of a general pattern to be revealed later) to write $\cL^p_0:=\cL^p$ for $1\le p<\infty$. 
\end{definition}

The Schatten ideals are perhaps the most prominent incarnations \cite[\S III.7]{gk_lin} of \Cref{def:sn} \Cref{item:snideal} below (for which we refer to \cite[\S\S III.2 and III.6]{gk_lin}): specialize \Cref{def:sn} \Cref{item:sn} to $\vvvert\cdot\vvvert = \|\cdot\|_p$.

\begin{definition}\label{def:sn}
  Let $H$ be a Hilbert space, $\cK:=\cK(H)\le \cB(H)=:\cB$ the ideal of compact operators thereon, and $\|\cdot\|$ the usual norm on $\cB$.
  \begin{enumerate}[(1)]

  \item\label{item:sn} A {\it symmetric norm} $\vvvert\cdot\vvvert$ on an ideal $\cI\trianglelefteq \cB:=\cB(H)$ is a norm in the usual sense \cite[Definitions 7.3 and 7.4]{trev_tvs} for which
    \begin{itemize}
    \item we also have
      \begin{equation}\label{eq:axbaxb}
        \vvvert AXB\vvvert\le \|A\|\cdot \vvvert X\vvvert \cdot \|B\|,\quad \forall X\in \cI,\ A,B\in \cB;
      \end{equation}
    \item and in addition
      \begin{equation}\label{eq:rk1}
        \vvvert X\vvvert = \|X\| = \mu_1(X)
      \end{equation}
      for all rank-1 operators $X\in \cI$. 
    \end{itemize}

  \item\label{item:snideal} A {\it symmetrically-normed (or sn-)ideal} of $\cB$ is one which is complete with respect to a symmetric norm $\vvvert \cdot\vvvert$.

  \item\label{item:sn0} For an sn-ideal $(\cI,\ \vvvert\cdot\vvvert)\trianglelefteq \cB$ we write $\cI_0\le \cI$ for the closure under $\vvvert\cdot\vvvert$ of the ideal of finite-rank operators in $\cI$.

  \item\label{item:snapprox} An sn-ideal $\cI\trianglelefteq \cB$ is {\it approximable} if $\cI_0=\cI$. 
  \end{enumerate}
  Unless specified otherwise, sn-ideals are assumed contained in $\cK$; to remind the reader of this, we will often write $\cI\trianglelefteq \cK$ (in place of $\cB$).  
\end{definition}

\begin{remark}
  Because \cite{gk_lin} works with {\it separable} Hilbert spaces, the approximable sn-ideals of that source are simply those that are separable under their symmetric norm: \cite[\S III.6, Theorems 6.1 and 6.2]{gk_lin}.

  Recall \cite[\S I.8.1.5]{blk_oa} that the ideal $\cK(H)\trianglelefteq \cB(H)$ of compact operators is the {\it largest} approximable ideal. We will typically only work with ideals
  \begin{equation*}
    \cF(K)\trianglelefteq \cI\trianglelefteq\cK(H)
  \end{equation*}
  intermediate between that of {\it finite-rank} operators and that of compact operators (these are {\it all} of the proper ideals of $\cB(H)$ when $H$ is separable \cite[\S III.1, Theorem 1.1]{gk_lin}), but note \cite[\S III.14]{gk_lin} that they are by no means all approximable. 
\end{remark}

The norms that sn-ideals come equipped with have all manner of pleasant properties \cite[\S\S III.3 and III.4]{gk_lin}:

\begin{definition}\label{def:symnorm}
  A {\it symmetric norming (or sn-)function} is a norm on the space $c_{00}$ of complex-valued eventually-zero sequences which
  \begin{enumerate}[(a)]
  \item is {\it symmetric} in the sense that $\Phi(\xi) = \Phi(\overline{\xi})$ whenever the sequence $\overline{\xi}$ is obtained from $\xi$ by permuting an initial segment thereof and then replacing the entries by their absolute values;
  \item and satisfies the normalization condition
    \begin{equation*}
      \Phi(1,\ 0,\ 0,\ \cdots)=1.
    \end{equation*}
  \end{enumerate}
\end{definition}

The correspondence between sn-ideals and sn-functions is not quite one-to-one, but there is a back-and-forth translation procedure \cite[\S III.3, Theorem 3.1]{gk_lin}:

\begin{equation}\label{eq:normfn}
  \vvvert A\vvvert_{\Phi}:=\Phi(\mu(A)):=\sup_{P}\Phi(\mu(PAP)),
\end{equation}
where $P$ ranges over finite-rank projections, defines a norm on the domain $\cI_{\Phi}$ where it is finite; that domain is furthermore an sn-ideal. The construction works backwards as well: {\it starting} with an abstract symmetric norm on the left-hand side of \Cref{eq:normfn} produces an sn-function by {\it defining} $\Phi$ via the equation. 

\begin{remark}\label{re:smlbig}
  To summarize, to an sn-function $\Phi$ one can attach canonically {\it two} sn-ideals (contained in $\cK$, per the convention mentioned in \Cref{def:sn}) whose topology is induced by the ``same'' associated norm $\vvvert\cdot\vvvert_{\Phi}$:
  \begin{itemize}
  \item the {\it largest} such ideal, denoted above by $\cI_{\Phi}$ (the $\mathfrak{S}_{\Phi}$ of \cite[\S III.4]{gk_lin});

  \item and the {\it smallest}, $\cI_{\Phi,0}=(\cI_{\Phi})_0$: the approximable portion of $\cI_{\Phi}$, denoted by $\mathfrak{S}_{\Phi}^{(0)}$ in \cite[\S III.6]{gk_lin}.
  \end{itemize}
\end{remark}

The approximable sn-ideals, it turns out, are precisely those that are ``robust'' under multiplication by ideals that are either approximable or contained in $\cK$:

\begin{theorem}\label{th:multcl}
  For an sn-ideal $\cI\trianglelefteq \cB:=\cB(H)$ on a Hilbert space $H$ the following conditions are equivalent:
  \begin{enumerate}[(a)]

  \item\label{item:ki} We have $\cK\cI=\cI(=\cI\cK)$, where $\cK$ is the ideal of compact operators.
    
  \item\label{item:ji} We have $\cJ\cI=\cI(=\cI\cJ)$ for {\it some} approximable sn-ideal $\cJ\trianglelefteq \cB$.
    
  \item\label{item:ink} We have $\cJ\cI=\cI(=\cI\cJ)$ for some ideal of $\cB$ contained in $\cK$.
    
  \item\label{item:isprod} $\cI$ is a product $\cJ\cI'$ for sn-ideals $\cI'$ and $\cJ$ at least one of which is approximable. 
 
  \item\label{item:isapprox} $\cI$ is approximable.

  \end{enumerate}
\end{theorem}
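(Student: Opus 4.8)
The plan is to close the cycle \Cref{item:isapprox}$\Rightarrow$\Cref{item:ki}$\Rightarrow$\Cref{item:ji}$\Rightarrow$\Cref{item:ink}$\Rightarrow$\Cref{item:isapprox} and then fold in the factorization condition \Cref{item:isprod} by showing it equivalent to \Cref{item:isapprox} directly. Two of the implications are essentially free: since $\cK$ is an approximable sn-ideal (indeed the largest approximable ideal, as recalled above), \Cref{item:ki}$\Rightarrow$\Cref{item:ji} is witnessed by $\cJ=\cK$; and because our standing convention places every approximable sn-ideal inside $\cK$, \Cref{item:ji}$\Rightarrow$\Cref{item:ink} is immediate. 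In all of the product conditions only the inclusion $\cI\subseteq\cJ\cI$ carries content, the reverse $\cJ\cI\subseteq\cI$ holding because $\cI$ is an ideal; and the two-sided statements (the parenthetical $=\cI\cK$, etc.) follow from the one-sided ones by passing to adjoints, using that $\mu_n(X^*)=\mu_n(X)$ makes every sn-ideal self-adjoint with $\vvvert X^*\vvvert=\vvvert X\vvvert$.

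The substantive forward step is \Cref{item:isapprox}$\Rightarrow$\Cref{item:ki}, a factorization of Gohberg--Krein type. Given $T\in\cI$, I would write $T=U|T|$ and diagonalize $|T|=\mathrm{diag}(\mu_n(T))$ in an eigenbasis. Approximability says exactly that the $\vvvert\cdot\vvvert_\Phi$-norms of the tails $(\mu_n(T))_{n\ge N}$ tend to $0$, so I may choose indices $n_1<n_2<\cdots$ with $\Phi\big(0,\dots,0,\mu_{n_k}(T),\mu_{n_k+1}(T),\dots\big)<4^{-k}$, and then set $\kappa_n:=2^{-k}$ and $s_n:=2^{k}\mu_n(T)$ on each block $n_k\le n<n_{k+1}$. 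The identity $\mu_n(T)=\kappa_n s_n$ gives $T=\big(U\,\mathrm{diag}(\kappa_n)\big)\,\mathrm{diag}(s_n)$ with $U\,\mathrm{diag}(\kappa_n)\in\cK$ (as $\kappa_n\to0$), while subadditivity and monotonicity of $\Phi$ yield $\vvvert\mathrm{diag}(s_n)\vvvert_\Phi\le\sum_k 2^k 4^{-k}<\infty$, so $\mathrm{diag}(s_n)\in\cI$. Thus $T\in\cK\cI$, giving $\cI\subseteq\cK\cI$.

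For \Cref{item:ink}$\Rightarrow$\Cref{item:isapprox}, assume $\cJ\cI=\cI$ with $\cJ\subseteq\cK$ an ideal, and let $\cI_0$ be the $\vvvert\cdot\vvvert$-closure of the finite-rank operators (\Cref{def:sn} \Cref{item:sn0}). Writing $T\in\cI$ as a finite sum $\sum_k a_k b_k$ with $a_k\in\cJ\subseteq\cK$ and $b_k\in\cI$, each $a_k$ is a norm-limit of finite-rank $F$, and \Cref{eq:axbaxb} gives $\vvvert a_kb_k-Fb_k\vvvert=\vvvert(a_k-F)b_k\vvvert\le\|a_k-F\|\,\vvvert b_k\vvvert\to0$; hence each $a_kb_k$, and so $T$, lies in the subspace $\cI_0$, i.e. $\cI=\cI_0$ is approximable. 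Finally for \Cref{item:isprod}: \Cref{item:isapprox} already produces $\cI=\cK\cI$ with $\cK$ approximable, which is an instance of \Cref{item:isprod}. Conversely, suppose $\cI=\cJ\cI'$ with (say) $\cJ$ approximable, the case of $\cI'$ approximable being symmetric via adjoints. The previous estimate does not apply verbatim, since $b\in\cI'$ need not lie in $\cI$; instead I would invoke the closed graph theorem. For fixed $b\in\cI'$ the map $R_b\colon\cJ\to\cI$, $c\mapsto cb$, is well defined (as $\cJ b\subseteq\cJ\cI'=\cI$) and has closed graph — convergence in either symmetric norm forces convergence in operator norm, using $\|X\|=\mu_0(X)\le\vvvert X\vvvert$ — hence is bounded. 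Approximating $a\in\cJ$ by finite-rank $F_m$ in $\vvvert\cdot\vvvert_\cJ$ then gives $\vvvert ab-F_mb\vvvert_\cI\le\|R_b\|\,\vvvert a-F_m\vvvert_\cJ\to0$, so $ab\in\cI_0$ and $\cI$ is approximable.

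I expect the factorization \Cref{item:isapprox}$\Rightarrow$\Cref{item:ki} to be the main obstacle: the block decomposition must be engineered so that $\kappa_n\to0$ (forcing compactness of the extracted left factor) while simultaneously keeping $\vvvert\mathrm{diag}(s_n)\vvvert_\Phi$ finite, and one must take mild care that $\Phi$ is evaluated on decreasing rearrangements throughout (so that the tail estimates transfer to the blocks). Everything else reduces to the single inequality \Cref{eq:axbaxb}, the density of finite-rank operators in $\cK$, and a routine closed-graph argument.
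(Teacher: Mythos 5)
Your proof is correct and follows essentially the same route as the paper: the same polar-decomposition-plus-block-scaling factorization (indices $n_k$ with tail $\Phi$-norm $<4^{-k}$, scaling by $2^k$ on blocks) for \Cref{item:isapprox}$\Rightarrow$\Cref{item:ki}, and the same closed-graph continuity of $c\mapsto cb$ (the paper's \Cref{pr:multarecont}) for \Cref{item:isprod}$\Rightarrow$\Cref{item:isapprox}. The only difference is organizational — you handle \Cref{item:ink}$\Rightarrow$\Cref{item:isapprox} directly via \Cref{eq:axbaxb} (possible since there $b_k\in\cI$), where the paper instead routes \Cref{item:ki}--\Cref{item:ink} through their mutual equivalence and then through \Cref{item:isprod} — which is a harmless and slightly cleaner rearrangement.
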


A brief postponement of the proof will render it simpler.

The multiplier terminology below is of course familiar; the notation is borrowed from \cite[discussion following Proposition 1.11]{am_comm}. 

\begin{definition}\label{def:multide}
  Let $\cI,\cJ\trianglelefteq \cB:=\cB(H)$ be two ideals. The corresponding {\it ideal quotient}, or the ideal of {\it $\cJ\to \cI$ multipliers} is
  \begin{equation*}
    (\cI:\cJ):=\{x\in \cB\ |\ x\cJ\subseteq \cI\}
    =\{x\in \cB\ |\ \cJ x\subseteq \cI\}.
  \end{equation*}
  The left-right symmetry follows from the fact that ideals in $\cB$ are automatically self-adjoint \cite[\S III.1, following Corollary 1.1]{gk_lin}.

  Given a third ideal $\cL\trianglelefteq \cB$ we will also write
  \begin{equation*}
    (\cI:\cJ)_{\cL}:=(\cI:\cJ)\cap \cL;
  \end{equation*}
  this applies mostly to $\cL:=\cK$, the ideal of compact operators, given that most ideals of interest in the sequel will be contained in $\cK$ to begin with. 
\end{definition}

\begin{remark}
  The ideal quotients $(\cI:\cJ)$ are the {\it relative conjugate ideals} $\cJ^{\times}\braket{\cI}$ of \cite[Definition 3.3]{sal_symm}. 
\end{remark}

The following simple remark will come in handy frequently. 

\begin{proposition}\label{pr:multarecont}
  Let $(\cI,\ \vvvert\cdot\vvvert_{\cI})$ and $(\cJ,\ \vvvert\cdot\vvvert_{\cJ})$ be two sn-ideals in $\cK:=\cK(H)$ for some Hilbert space $H$.

  For every $x\in (\cI:\cJ)$ the map
  \begin{equation*}
    (\cI,\ \vvvert\cdot\vvvert_{\cI})
    \xrightarrow{\quad x\cdot\quad}
    (\cJ,\ \vvvert\cdot\vvvert_{\cJ})
  \end{equation*}
  is continuous, as is the corresponding {\it right} multiplication.   
\end{proposition}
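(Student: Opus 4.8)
The plan is to invoke the closed graph theorem. Both $(\cI,\vvvert\cdot\vvvert_{\cI})$ and $(\cJ,\vvvert\cdot\vvvert_{\cJ})$ are Banach spaces, being sn-ideals and hence complete by \Cref{def:sn} \Cref{item:snideal}, and the map $T\mapsto xT$ is manifestly linear and lands in the target ideal precisely by the defining containment of the ideal quotient. So it suffices to show its graph is closed, after which boundedness --- equivalently continuity --- is automatic. The point of routing through the closed graph theorem rather than an explicit estimate is that there is a priori no relationship between $\vvvert\cdot\vvvert_{\cI}$ and $\vvvert\cdot\vvvert_{\cJ}$ on which to base a direct bound; completeness is what manufactures the uniform constant for free.

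The one ingredient that makes the argument run is that each symmetric norm dominates the operator norm: $\|X\|\le\vvvert X\vvvert$ for every $X$ in an sn-ideal. To see this, sandwich $X$ between rank-one projections. For unit vectors $\xi,\eta$ let $P_\xi,P_\eta$ be the corresponding rank-one orthogonal projections; then $P_\eta X P_\xi$ is a rank-one operator of operator norm $|\langle\eta,X\xi\rangle|$, so \Cref{eq:rk1} gives $\vvvert P_\eta X P_\xi\vvvert=|\langle\eta,X\xi\rangle|$, while \Cref{eq:axbaxb} gives $\vvvert P_\eta X P_\xi\vvvert\le\|P_\eta\|\cdot\vvvert X\vvvert\cdot\|P_\xi\|=\vvvert X\vvvert$. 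Taking the supremum over $\xi,\eta$ yields $\|X\|\le\vvvert X\vvvert$. This is the main (and essentially only) obstacle; it is standard but does genuinely require the rank-one normalization axiom.

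With this in hand the graph is closed. Suppose $T_n\to T$ in $\vvvert\cdot\vvvert_{\cI}$ and $xT_n\to S$ in $\vvvert\cdot\vvvert_{\cJ}$. By the norm domination just established, $T_n\to T$ and $xT_n\to S$ both hold in operator norm; but $\|xT_n-xT\|\le\|x\|\cdot\|T_n-T\|\to 0$, so $xT_n\to xT$ in operator norm as well, whence $S=xT$ by uniqueness of operator-norm limits. The closed graph theorem now gives continuity. The right-multiplication statement is identical, using $\|T_nx-Tx\|\le\|T_n-T\|\cdot\|x\|$ in place of the estimate above.
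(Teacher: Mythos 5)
Your proof is correct and follows essentially the same route as the paper: both arguments observe that the graph of $T\mapsto xT$ is closed for the (weaker) operator-norm topology and then invoke the closed graph theorem for the complete symmetric norms. The only difference is that you explicitly verify the domination $\|X\|\le\vvvert X\vvvert$ via \Cref{eq:rk1} and \Cref{eq:axbaxb}, a step the paper leaves implicit; that verification is accurate and welcome.
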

\begin{proof}
  The multiplication map is of course continuous with respect to the original norm $\|\cdot\|$, so the graph of the map is a closed subset of $\cI\times \cJ$ for {\it that}, weaker topology. It is thus also closed for the stronger topology induced by the two norms $\vvvert\cdot\vvvert_{\cI,\cJ}$, and we are done by the {\it Closed Graph theorem} \cite[Corollary 4 to Theorem 17.1]{trev_tvs}.
\end{proof}

\pf{th:multcl}
\begin{th:multcl}
  The achirality of conditions \Cref{item:ki}-\Cref{item:isprod} follows from the fact that all ideals in sight are self-adjoint, so we will not bother with the issue of left versus right multiplication much.
  \begin{enumerate}[]
  \item {\bf \Cref{item:ki} $\Longleftrightarrow$ \Cref{item:ji} $\Longleftrightarrow$ \Cref{item:ink}} follows from the fact that $\cK$ is the {\it largest} approximable ideal, so that 
    \begin{equation*}
      \cJ\cI\subseteq \cK\cI\subseteq \cI
    \end{equation*}
    for any approximable $\cJ$. 

  \item {\bf \Cref{item:ink} $\Longrightarrow$ \Cref{item:isprod}:} immediate.

  \item {\bf \Cref{item:isprod} $\Longrightarrow$ \Cref{item:isapprox}.} Every element of $\cI$ is by assumption a finite sum of products $x_{\cJ}x_{\cI'}$ of elements belonging to the two ideals depicted as subscripts, so it is enough to argue that such products are approximable.
    
    Now, $x_{\cI'}$ belongs to the ideal quotient $(\cI:\cJ)$, so by \Cref{pr:multarecont} the resulting multiplication map
    \begin{equation*}
      (\cJ,\ \vvvert\cdot\vvvert_{\cJ})
      \xrightarrow{\quad \cdot x_{\cI'}\quad}
      (\cI,\ \vvvert\cdot\vvvert_{\cI})
    \end{equation*}
    is continuous. Because $x_{\cJ}$ is (by hypothesis) $\vvvert \cdot\vvvert_{\cJ}$-approximable by finite-rank operators, the same goes for $x_{\cJ}x_{\cI'}$ in the $\vvvert \cdot\vvvert_{\cI}$ norm.

  \item {\bf \Cref{item:isapprox} $\Longrightarrow$ \Cref{item:ki}.} The {\it polar decomposition}
    \begin{equation*}
      \cI\ni x=u|x|,\quad |x|=u^*x
    \end{equation*}
    of \cite[\S I.5.2.2]{blk_oa} affords a decomposition of an arbitrary element of $\cI$ as a product with a {\it positive} factor from $\cI$, so it is enough to work with positive elements $0\le x\in \cI$. Being also compact, these can be diagonalized:
    \begin{equation*}
      x=\mathrm{diag}(\mu_0(x),\ \mu_1(x),\ \cdots).
    \end{equation*}
    Approximability implies via \cite[\S III.6, Lemma 6.1]{gk_lin} that
    \begin{equation}\label{eq:mononorm}
      \Phi(\mu_{n+1}(x),\ \mu_{n+2}(x),\ \cdots)\xrightarrow[n]{\quad}0,
    \end{equation}
    where $\Phi$ is the sn-function implementing the norm $\vvvert\cdot\vvvert_{\cI}$. Select indices $n_k$ such that
    \begin{equation*}
      \Phi(\mu_{n_k}(x),\ \mu_{n_k+1}(x),\ \cdots) < \frac 1{4^k}. 
    \end{equation*}
    Scaling each $\mu_{\bullet}(x)$ for
    \begin{equation*}
      n_k\le \bullet\le n_{k+1}-1
    \end{equation*}
    by $2^k$ will produce a sequence $(\mu'_n)_n$ that still satisfies \Cref{eq:mononorm}, so that
    \begin{equation*}
      x'=\mathrm{diag}(\mu'_0(x),\ \mu'_1(x),\ \cdots)\in \cI
    \end{equation*}
    by the completeness of $\cI$ under the norm induced by $\Phi$. Observe next that $x=tx'$ for a diagonal operator $t$ whose entries with indices between $n_k$ and $n_{k+1}-1$ inclusive are $\frac 1{2^k}$. In particular $t$ is compact, and we are done: $x\in \cK \cI$.  \qedhere
  \end{enumerate}
\end{th:multcl}

\begin{remark}\label{re:salinas}
  The discussion following \cite[\S III.4, Theorem 4.1]{gk_lin} shows that two ideals $\cI_{\Phi_i}$ (\Cref{re:smlbig}) attached to sn-functions $\Phi_i$, $i=1,2$ coincide precisely when the functions are equivalent in the usual norm-theoretic sense (see also \cite[\S III.2, Theorem 2.1]{gk_lin}). \cite[Theorem 2.3]{sal_symm} extends this remark to {\it inclusions} of sn-ideals (that paper also works with separable Hilbert spaces, but the proof simply goes through in general):
  
For two sn-functions $\Phi_i$, $i=1,2$ the following conditions are equivalent.
  \begin{enumerate}[(a)]
  \item\label{item:whenincl-a} $\Phi_2\le C\Phi_1$ for some constant $C>0$.
    
  \item\label{item:whenincl-a'} $\vvvert\cdot\vvvert_{\Phi_2}\le C\vvvert\cdot\vvvert_{\Phi_1}$ for some constant $C>0$.

  \item\label{item:whenincl-b} $\cI_{\Phi_1}\subseteq \cI_{\Phi_2}$.
  \item\label{item:whenincl-c} $\cI_{\Phi_1,0}\subseteq \cI_{\Phi_2}$.
  \item\label{item:whenincl-d} $\cI_{\Phi_1,0}\subseteq \cI_{\Phi_2,0}$.
  \item\label{item:whenincl-e} Some $\vvvert\cdot\vvvert_{\Phi_1}$-normed sn-ideal $\cI_1$ is contained in some $\vvvert\cdot\vvvert_{\Phi_2}$-normed sn-ideal $\cI_2$.
  \end{enumerate}  
\end{remark}

To circle back to multipliers and their attendant topologies:

\begin{definition}\label{def:mui}
  Let $H$ be a Hilbert space and $(\cI,\ \vvvert\cdot\vvvert_{\cI})\trianglelefteq \cB:=\cB(H)$ be an sn-ideal.
  \begin{enumerate}[(1)]

  \item\label{item:muli} The {\it (left) $\cI$-multiplier topology} (or {\it multiplier topology attached to $\cI$}) $\mu_{\cI}=\tensor[_\ell]{\mu}{_{\cI}}$ on $\cB$ is the locally convex topology induced by the seminorms
    \begin{equation}\label{eq:muli}
      \cB\in x\xmapsto{\quad} \|xx_{\cI}\|_{\cI},\quad x_{\cI}\in \cI. 
    \end{equation}

  \item\label{item:muri} Similarly, the {\it right} $\cI$-multiplier topology $\tensor[_r]{\mu}{_\cI}$ is induced by the seminorms 
    \begin{equation}\label{eq:muri}
      \cB\in x\xmapsto{\quad} \|x_{\cI} x\|_{\cI},\quad x_{\cI}\in \cI
    \end{equation}
    instead.

    As the notation suggests, unless the `left' or `right' modifiers are invoked explicitly, we default to `left'.

  \item\label{item:muast} The {\it $\cI$-multiplier$^*$ topology} $\mu^*_{\cI}$ is the coarsest locally convex topology finer than both $\mu_{\ell,\cI}$ and $\mu_{r,\cI}$. In other words, it is induced by {\it both} \Cref{eq:muli} and \Cref{eq:muri}.     
  \end{enumerate}
\end{definition}

\begin{remark}
  The `*' in \Cref{def:mui} \Cref{item:muast} is there (by analogy, say, with the strong$^*$ or $\sigma$-strong$^*$ topology \cite[\S I.3.1.6]{blk_oa}) in order to recall that convergence of a net $(x_{\lambda})_{\lambda}$ in $\mu_{\cI}^*$ is equivalent to $\mu_{\ell,\cI}$-convergence of both $(x_{\lambda})$ and $(x^*_{\lambda})$.
\end{remark}

The ideal quotients of \Cref{def:multide} play a role in assessing multiplier-topology strength.

\begin{proposition}\label{pr:ifprod}
  Let $\cI,\cJ\trianglelefteq \cB:=\cB(H)$ be two sn-ideals.

  If $\cI=\cJ (\cI:\cJ)$ (or, equivalently, $\cI\subseteq \cJ (\cI:\cJ)$) then
  \begin{equation}\label{eq:mule}
    \mu_{\cI}\preceq \mu_{\cJ}
    ,\quad
    \tensor[_r]{\mu}{_\cI}\preceq \tensor[_r]{\mu}{_\cJ}
    \quad\text{and}\quad
    \mu^*_{\cI}\preceq \mu^*_{\cJ}.
  \end{equation}
\end{proposition}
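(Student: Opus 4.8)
The plan is to prove all three relations at the level of the seminorms that generate the respective topologies, treating the left-multiplier case $\mu_{\cI}\preceq\mu_{\cJ}$ as the prototype. Recall that $\mu_{\cI}$ is generated by the seminorms $x\mapsto\vvvert xx_{\cI}\vvvert_{\cI}$ ($x_{\cI}\in\cI$) and $\mu_{\cJ}$ by $x\mapsto\vvvert xy\vvvert_{\cJ}$ ($y\in\cJ$); since both topologies are locally convex, it suffices to check that each generating seminorm of $\mu_{\cI}$ is dominated by a finite sum of generating seminorms of $\mu_{\cJ}$. I would first dispose of the parenthetical equivalence in the hypothesis: because $z\in(\cI:\cJ)$ entails $\cJ z\subseteq\cI$, one always has $\cJ(\cI:\cJ)\subseteq\cI$, so the inclusion $\cI\subseteq\cJ(\cI:\cJ)$ is automatically an equality.

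Next, fix $x_{\cI}\in\cI$ and, invoking $\cI=\cJ(\cI:\cJ)$, write it as a finite sum $x_{\cI}=\sum_{k=1}^{m}y_{k}z_{k}$ with $y_{k}\in\cJ$ and $z_{k}\in(\cI:\cJ)$. For arbitrary $x\in\cB$ the ideal property gives $xy_{k}\in\cJ$, while $z_{k}\in(\cI:\cJ)$ means $\cJ z_{k}\subseteq\cI$, so by \Cref{pr:multarecont} right multiplication by $z_{k}$ is a continuous map $(\cJ,\vvvert\cdot\vvvert_{\cJ})\to(\cI,\vvvert\cdot\vvvert_{\cI})$, say with bound $C_{k}$. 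Hence
\begin{equation*}
  \vvvert xx_{\cI}\vvvert_{\cI}
  =\vvvert \sum_{k}(xy_{k})z_{k}\vvvert_{\cI}
  \le\sum_{k}\vvvert (xy_{k})z_{k}\vvvert_{\cI}
  \le\sum_{k}C_{k}\,\vvvert xy_{k}\vvvert_{\cJ},
\end{equation*}
and the right-hand side is a finite sum of $\mu_{\cJ}$-seminorms evaluated at $x$. This exhibits the continuity and yields $\mu_{\cI}\preceq\mu_{\cJ}$.

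The right-multiplier statement $\tensor[_r]{\mu}{_\cI}\preceq\tensor[_r]{\mu}{_\cJ}$ is proved the same way once one notes that, taking adjoints and using that every ideal of $\cB$ (including $(\cI:\cJ)$) is self-adjoint, $\cI=\cJ(\cI:\cJ)$ is equivalent to $\cI=(\cI:\cJ)\cJ$; writing $x_{\cI}=\sum_{k}z_{k}y_{k}$ accordingly and using \emph{left} multiplication by $z_{k}$ bounds $\vvvert x_{\cI}x\vvvert_{\cI}$ by $\sum_{k}C_{k}\vvvert y_{k}x\vvvert_{\cJ}$. Finally $\mu^{*}_{\cI}\preceq\mu^{*}_{\cJ}$ is immediate from the two preceding cases: $\mu^{*}_{\cI}$ is generated by both the left and the right families of $\cI$-seminorms, each of which is continuous for $\mu_{\cJ}$, respectively $\tensor[_r]{\mu}{_\cJ}$, and hence \emph{a fortiori} for the finer topology $\mu^{*}_{\cJ}$.

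I do not expect a genuine obstacle: the single substantive ingredient is the automatic continuity supplied by \Cref{pr:multarecont}, and the decomposition of $x_{\cI}$ is a finite sum by the very definition of the product ideal. The only place asking for care is orientation bookkeeping---ensuring that the side on which the fixed $\cI$-element sits matches the multiplication handled by \Cref{pr:multarecont}---together with the short adjoint argument needed to pass between $\cJ(\cI:\cJ)$ and $(\cI:\cJ)\cJ$ in the right-multiplier case.
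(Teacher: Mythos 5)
Your proof is correct and follows essentially the same route as the paper's: both rest on decomposing $x_{\cI}$ as a finite sum of products $y_k z_k$ with $y_k\in\cJ$, $z_k\in(\cI:\cJ)$ and invoking the automatic continuity of multiplication by $z_k$ from \Cref{pr:multarecont}. The only cosmetic differences are that you phrase the estimate via seminorm domination where the paper uses net convergence, and that you spell out the adjoint bookkeeping for the right-multiplier case which the paper dispatches with ``apply the $*$ operation.''
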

\begin{proof}
  That equality is equivalent to inclusion follows from the fact that the {\it opposite} inclusion
  \begin{equation*}
    \cI\supseteq \cJ (\cI:\cJ)
  \end{equation*}
  is automatic.

  Proving the first dominance in \Cref{eq:mule} will suffice, as the others follow (the second is obtained by applying the $*$ operation to the first, etc.).

  Suppose
  \begin{equation*}
    \cI\ni x_{\cI} = x_{\cJ}y,\quad x_{\cJ}\in \cJ,\ y\in (\cI:\cJ).
  \end{equation*}
  If
  \begin{equation*}
    x_{\lambda}x_{\cJ}\xrightarrow[\lambda]{\quad\vvvert\cdot\vvvert_{\cJ}\quad} xx_{\cJ}
  \end{equation*}
  for a net $(x_{\lambda})\subset \cB$, then
  \begin{equation*}
    x_{\lambda}x_{\cI}=x_{\lambda}x_{\cJ}y\xrightarrow[\lambda]{\quad\vvvert\cdot\vvvert_{\cI}\quad} xx_{\cJ}y = xx_{\cI}
  \end{equation*}
  by \Cref{pr:multarecont}. The conclusion follows from the assumption that all elements of $\cI$ are sums of products $x_{\cJ}y$.
\end{proof}

In particular:

\begin{corollary}\label{cor:allweakerthanmuinf}
  For all approximable sn-ideals $\cI\trianglelefteq \cK(H)$ we have
  \begin{equation*}
    \mu_{\cI}\preceq \mu_{\cK}
    \quad\text{and}\quad
    \mu^*_{\cI}\preceq \mu^*_{\cK}.
  \end{equation*}
\end{corollary}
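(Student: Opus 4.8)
The plan is to deduce this directly from \Cref{pr:ifprod} by taking $\cJ := \cK$. Before invoking that proposition I would first observe that $\cK$ is itself a legitimate sn-ideal into which it may be fed: it is complete under the operator norm $\|\cdot\|$, and $\|\cdot\|$ satisfies the requirements of \Cref{def:sn} \Cref{item:sn} (the submultiplicativity \Cref{eq:axbaxb} is the usual operator-norm estimate, and a rank-one operator has operator norm equal to its single nonzero characteristic number). Thus \Cref{pr:ifprod} applies with $\cJ = \cK$.

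With $\cJ = \cK$ fixed, the only hypothesis to check is the inclusion $\cI \subseteq \cK(\cI:\cK)$, and both ingredients are already at hand. First, since $\cI$ is assumed approximable, \Cref{th:multcl} (the implication \Cref{item:isapprox} $\Rightarrow$ \Cref{item:ki}) yields $\cK\cI = \cI$. Second, because $\cI$ is an ideal of $\cB$ we have $\cI\cK \subseteq \cI$, which is precisely the assertion $\cI \subseteq (\cI:\cK)$. Multiplying the latter inclusion on the left by $\cK$ and then substituting the former gives
\[
  \cI = \cK\cI \subseteq \cK(\cI:\cK),
\]
exactly the hypothesis required by \Cref{pr:ifprod}.

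Feeding this into \Cref{pr:ifprod} then delivers both dominations $\mu_{\cI} \preceq \mu_{\cK}$ and $\mu^*_{\cI} \preceq \mu^*_{\cK}$ simultaneously. I do not anticipate any genuine obstacle here: the whole content is to confirm that $\cK$ qualifies as an sn-ideal so that \Cref{pr:ifprod} genuinely applies, and to invoke \Cref{th:multcl} in the correct direction so as to produce $\cK\cI = \cI$. The remaining set-theoretic inclusions are immediate from $\cI$ being an ideal, so no further computation is needed.
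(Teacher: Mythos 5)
Your proof is correct and follows essentially the same route as the paper: apply \Cref{pr:ifprod} with $\cJ=\cK$, using \Cref{th:multcl} (approximability $\Rightarrow$ $\cK\cI=\cI$) to verify the hypothesis $\cI\subseteq\cK(\cI:\cK)$. Your chain $\cI=\cK\cI\subseteq\cK(\cI:\cK)$, via the trivial observation $\cI\subseteq(\cI:\cK)$, is in fact a slightly more direct rendering of the inclusion the paper's one-line proof invokes, and your preliminary check that $\cK=\cL^\infty_0$ genuinely qualifies as an sn-ideal is appropriate care.
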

\begin{proof}
  This is a consequence of \Cref{pr:ifprod}, given that
  \begin{equation*}
    \cI = \cI\cK = \cI(\cI:\cK)_{\cK}\subseteq \cI(\cI:\cK)
  \end{equation*}
  by \Cref{th:multcl}.
\end{proof}

\begin{remark}\label{re:orderincl}
  The relation
  \begin{equation}\label{eq:ijij}
    \cI\subseteq \cJ(\cI:\cJ)
  \end{equation}
  is transitive, as follows easily from
  \begin{equation*}
    (\cI:\cJ)(\cJ:\cJ') \subseteq (\cI:\cJ').
  \end{equation*}
  The same goes for the relative ideal quotients $(-:-)_{\cL}$ of \Cref{def:multide}. Said relations are also antisymmetric, because they imply $\cI\subseteq \cJ$. Reflexivity is furthermore obvious for \Cref{eq:ijij} (because $1\in (\cI:\cI)$) and holds for $(-:-)_{\cK}$ for approximable ideals by \Cref{th:multcl}, so in the cases of interest, at least, the relations in question are partial orders on the collection of ideals in $\cB(H)$. For that reason, we henceforth write
  \begin{equation}\label{eq:orderincl}
    \cI\preceq_{\cL} \cJ
    \quad\text{for}\quad
    \cI\subseteq \cJ(\cI:\cJ)_{\cL}
  \end{equation}
  for brevity. 
\end{remark}

\Cref{def:schatp} extends to a two-parameter family of ideals/norms (\cite[\S $4.2.\alpha$]{Con94}, \cite[\S I.8.7.6]{blk_oa}):

\begin{definition}\label{def:lpq}
  Let $H$ be a Hilbert space,
  \begin{equation*}
    1<p<\infty,\quad 1\le q\le \infty
    \quad\text{or}\quad
    p,q\in \{1,\infty\},
  \end{equation*}
  and write $\alpha:=\frac 1p$, $\beta:=\frac 1q$.
  
  The {\it Lorentz ideal} (\cite[\S 4.11]{dfww_comm}, \cite[\S 1.3]{bl_interp}) $\cL^{p,q}=\cL^{p,q}(H)$, complete with respect to the corresponding norm $\|\cdot\|_{p,q}$, is defined as follows (in overlapping cases conflicting norm definitions are equivalent):
  \begin{enumerate}[(a)]
  \item\label{item:deflp} $\cL^{p,p}=\cL^p$ equipped with the usual norm $\|\cdot\|_p$ of \Cref{def:schatp} for all $1\le p\le\infty$;
    
  \item\label{item:deflpq} as the natural domain of the norm
    \begin{equation}\label{eq:pqnorm}
      \|T\|_{p,q}:=\left(\sum_{n\ge 1}n^{(\alpha-1)q-1}\sigma_n(T)^q\right)^{1/q}
    \end{equation}
    for
    \begin{equation*}
      1<p<\infty,\ 1\le q<\infty
      \quad\text{or}\quad
      p=\infty,\ q=1;
    \end{equation*}

  \item\label{item:deflpinf} as the natural domain of the norm \cite[p.9]{sim_tr}
    \begin{equation}\label{eq:pinfnorm}
      \|T\|_{p,\infty}:=\sup_{n\ge 1}n^{\alpha-1}\sigma_n(T)
    \end{equation}
    for $1<p$, $q=\infty$;

  \item\label{item:defl1inf} finally, as the natural domain of the norm
    \begin{equation}\label{eq:1infnorm}
      \|T\|_{1,\infty}:=\sup_{n\ge 2}\frac{\sigma_n(T)}{\log n}
    \end{equation}
    in the remaining case $p=1$, $q=\infty$. 
  \end{enumerate}
\end{definition}

\begin{remarks}\label{res:whennot0}
  \begin{enumerate}[(1)]

  \item That \Cref{eq:pqnorm,eq:pinfnorm,eq:1infnorm} are indeed norms follows from the fact that the $\sigma_n$ are \cite[(4.9)]{Con94}, as are the $L^q$ norms for $1\le q\le\infty$ for arbitrary measure spaces \cite[Theorem 3.9]{rud_rc} (e.g. $\left(\bZ_{>0},\ \nu\right)$ equipped with the measure $\nu(n)=n^{(\alpha-1)q-1}$, as appropriate for \Cref{eq:pqnorm}). 
    
  \item As indicated in passing in \Cref{def:lpq}, the terminology is borrowed from the {\it Lorentz spaces} \cite[\S 5.3]{bl_interp} familiar from interpolation theory. 

  \item There are related notions in the literature, such as the {\it Lorentz sequence spaces} $d({\bf w},p)$ of \cite[Vol.1, \S 4.e]{lt_cls}, where
    \begin{itemize}
    \item ${\bf w}=(w_n)_{n\in \bZ_{>0}}$ is a non-increasing positive sequence with
      \begin{equation*}
        w_1=1,\quad \lim_n w_n=0,\quad \sum_n w_n=\infty
      \end{equation*}
    \item and $1\le p<\infty$. 
    \end{itemize}
    The corresponding sequence space $d({\bf w},p)$ is a Banach space under the norm
    \begin{equation*}
      \|(\xi_n)\|_{{\bf w},p}:=\sup_{\text{permutations }\pi}\left(\sum_{n}|\xi_{\pi(n)}|^p w_n\right)^{\frac 1p}:
    \end{equation*}
    a weighted $\ell^p$ space of sorts. 

    One can construct a corresponding ideal in any $\cB(H)$ (and in fact $\cK(H)$), by the usual \cite[\S I.8.7.4]{blk_oa} correspondence between {\it order ideals} \cite[Definition 2.20]{dp_latt} in the lattice
    \begin{equation*}
      c^+_{0,\searrow}:=\left\{\text{non-decreasing, non-negative sequences vanishing at }0\right\}:
    \end{equation*}
    the collection
    \begin{equation*}
      d({\bf w},p)^+_{\searrow}
      :=
      \left\{\text{non-decreasing, non-negative members of }d({\bf w},p)\right\}
    \end{equation*}
    is precisely such an order ideal, and the corresponding (sn-)ideal in $\cK(H)$ is
    \begin{equation*}
      \left\{T\in \cK(H)\ |\ (\mu_n(T))_n\in d({\bf w},p)^+_{\searrow}\right\}.
    \end{equation*}
    \cite[\S III.15]{gk_lin} also discusses these for $p=1$: there, they would be denoted by $\mathfrak{S}_{\bf w}$. 

    Some of the $\cL^{p,q}$ can be obtained in this fashion, as will become apparent after \Cref{pr:hardy}, but not all: per \cite[Vol.1, discussion following Definition 4.e.1]{lt_cls}, $\ell^p$ itself is not even isomorphic to any of the $d({\bf w},p)$.
    
  \item\label{item:whenneed0} The ideal $\cL^{p,q}$ fails to be approximable precisely when $q=\infty$ (see either \cite[\S\S $4.2.\alpha$ and $4.2.\beta$, pp.309-310]{Con94} or \cite[\S I.8.7.6]{blk_oa}, though neither source states this fully).

    The corresponding approximable sub-ideals are as follows:
    \begin{itemize}
    \item for $1\le p<\infty$,
      \begin{equation*}
        \cL^{p,\infty}_0
        =
        \left\{T\in \cL^{p,\infty}\ \bigg|\ n^{\alpha-1}\sigma_n(T)\xrightarrow[n]{\quad}0\right\}
        =
        \left\{T\in \cL^{p,\infty}\ \bigg|\ n^{\alpha}\mu_n(T)\xrightarrow[n]{\quad}0\right\};
      \end{equation*}
    \item and
      \begin{equation*}
        \cL^{1,\infty}_0
        =
        \left\{T\in \cL^{1,\infty}\ \bigg|\ \frac{\sigma_n(T)}{\log n}\xrightarrow[n]{\quad}0\right\}
      \end{equation*}
    \end{itemize}
  \end{enumerate}
\end{remarks}

\begin{notation}\label{not:mutop}
  For the ideals $\cI=\cL^p$ or $\cL^{p,q}$ we denote the multiplier topologies of \Cref{def:mui} \Cref{item:muli} by $\mu_p$ and $\mu_{p,q}$ respectively, and similarly for parts \Cref{item:muri} and \Cref{item:muast} ($\tensor[_r]{\mu}{_p}$, $\mu_{p,q}^*$, etc.)

  To distinguish between the multiplier topologies induced by $\cL^{p,q}$ and $\cL^{p,q}_0$ we decorate the respective symbol with an additional `0' subscript, as in $\mu_{p,q\mid 0}$ or $\mu_{p\mid 0}^*$.
\end{notation}

\begin{remark}\label{re:howprevtops}
  The connection to the preceding discussion is the observation that
  \begin{itemize}
  \item $\mu_2$ and $\mu_2^*$ are precisely the $\sigma$-strong and $\sigma$-strong$^*$ topologies respectively;

  \item while $\mu_{\infty\mid 0}^*$ is nothing but the strict (hence also Mackey \cite[Corollary 2.8]{tay_strict}) topology on $\cB(H)\cong M(\cK(H))$.
  \end{itemize}
\end{remark}

It being occasionally more convenient to work with the sequence $(\mu_n(T))$ rather than $(\sigma_n(T))$, we record the following remark that seems a little difficult to locate in the literature in directly-citable form.

The point is that in the generic summand of \Cref{eq:pqnorm}, one can harmlessly make a substitution
\begin{equation*}
  \left(\frac{\sigma_n(T)}n\right)^q
  \xrightarrow{\qquad}
  \mu_n(T)^q
\end{equation*}
and similarly, every $\sigma_n$ can be replaced by an $n\mu_n$ in \Cref{eq:pinfnorm}. This is a very familiar theme, and an application of one of the myriad versions of {\it Hardy's inequality} \cite[\S 1.1]{ok_hardy}. The general principle is also invoked by name on \cite[p.308]{Con94} in a more limited scope, to conclude that $\|\cdot\|_{p,p}$ and $\|\cdot\|_p$ give rise to the same ideal.

Recall \cite[\S 1.3, p.7]{bl_interp} that a {\it quasi-}norm differs from a norm in that it might satisfy the triangle inequality only up to scaling:
\begin{equation*}
  \|x+y\|\le C(\|x\|+\|y\|).
\end{equation*}
\cite[Example preceding Theorem 1.16, p.9]{sim_tr} explains why the `quasi' qualifier is needed, say, for \Cref{eq:pinfnorm'} below.

\begin{proposition}\label{pr:hardy}
  In cases \Cref{item:deflpq} and \Cref{item:deflpinf} of \Cref{def:lpq} the norms \Cref{eq:pqnorm} and \Cref{eq:pinfnorm} are equivalent to the quasi-norms
  \begin{equation}\label{eq:pqnorm'}
    \|T\|'_{p,q}:=\left(\sum_{n\ge 1}n^{\alpha q-1}\mu_{n-1}(T)^q\right)^{1/q}
  \end{equation}
  and
  \begin{equation}\label{eq:pinfnorm'}
    \|T\|'_{p,\infty}:=\sup_{n\ge 1}n^{\alpha}\mu_{n-1}(T)
  \end{equation}
  respectively, where $\alpha:=\frac 1p$.

  Consequently, these define the same ideals $\cL^{p,q}$ of \Cref{def:lpq}.
\end{proposition}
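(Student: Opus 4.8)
The plan is to rewrite each of the four expressions so that the \emph{Hardy averaging operator} becomes visible, after which the claimed equivalence reduces to the boundedness of that operator on a power-weighted sequence space; the decisive point will be that the single condition $p>1$ (equivalently $\alpha<1$) is exactly what guarantees this boundedness throughout cases (b) and (c).

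First I would set $a_n:=\mu_{n-1}(T)$ for $n\ge 1$, so that $\sigma_n(T)=\sum_{k=1}^n a_k$ and $A_n:=\sigma_n(T)/n$ is the Ces\`aro average of the non-increasing sequence $(a_n)$. The elementary identity $n^{(\alpha-1)q-1}\sigma_n(T)^q=n^{\alpha q-1}A_n^q$ then recasts the two expressions as
\[
\|T\|_{p,q}^q=\sum_{n\ge 1}n^{\alpha q-1}A_n^q,\qquad (\|T\|'_{p,q})^q=\sum_{n\ge 1}n^{\alpha q-1}a_n^q,
\]
and likewise $\|T\|_{p,\infty}=\sup_n n^{\alpha}A_n$ against $\|T\|'_{p,\infty}=\sup_n n^{\alpha}a_n$. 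Since $(\mu_n(T))$ is non-increasing we have $n\,\mu_{n-1}(T)\le\sigma_n(T)$, i.e. $a_n\le A_n$ termwise, which yields $\|T\|'_{p,q}\le\|T\|_{p,q}$ and $\|T\|'_{p,\infty}\le\|T\|_{p,\infty}$ with constant $1$; this settles one direction of the equivalence immediately.

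The substantive direction is $\|T\|_{p,q}\le C\,\|T\|'_{p,q}$, i.e. controlling the averages $A_n$ by the $a_n$ in the weighted norm. For $1\le q<\infty$ I would invoke a weighted discrete Hardy inequality \cite[\S 1.1]{ok_hardy} for the averaging operator $(a_n)\mapsto(A_n)$ on $\ell^q$ with the power weight $n^{\alpha q-1}$; the condition for its boundedness is that the weight exponent obey $\alpha q-1<q-1$, equivalently $\alpha<1$, equivalently $p>1$. Every case in (b) satisfies this, including the boundary $(p,q)=(\infty,1)$ where $\alpha=0$. The sup endpoint $q=\infty$ of (c) I would instead treat by hand: writing $M:=\|T\|'_{p,\infty}$ gives $a_k\le M k^{-\alpha}$, whence $\sigma_n(T)\le M\sum_{k=1}^n k^{-\alpha}\le C_\alpha M\,n^{1-\alpha}$ (using $\alpha<1$), so that $n^{\alpha-1}\sigma_n(T)\le C_\alpha M$ uniformly in $n$. (The case $q=1$ also has a one-line self-contained proof by interchanging the order of summation and estimating $\sum_{n\ge k}n^{\alpha-2}\lesssim k^{\alpha-1}$, should one prefer not to cite Hardy there.)

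Putting the two directions together gives $\|T\|'_{p,q}\le\|T\|_{p,q}\le C\,\|T\|'_{p,q}$ (and similarly for $q=\infty$), so the expressions are equivalent; in particular they are finite on exactly the same operators, which is the final assertion that they define the same ideal $\cL^{p,q}$, and, being equivalent to the genuine norm, the primed expressions are automatically quasi-norms. The only real obstacle is the weight-exponent bookkeeping at the endpoints---checking that the one condition $p>1$ makes the weighted Hardy inequality applicable uniformly across (b) and (c) and handling the degenerate values $q\in\{1,\infty\}$ correctly---since the core averaging inequality itself is entirely classical.
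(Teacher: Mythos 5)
Your proposal is correct and follows essentially the same route as the paper's proof: the trivial inequality $\|\cdot\|'\le\|\cdot\|$ from monotonicity of $(\mu_n(T))$, a direct interchange-of-summation argument for $q=1$, a direct integral comparison $\sum_{k\le n}k^{-\alpha}\lesssim n^{1-\alpha}$ for $q=\infty$, and a weighted discrete Hardy inequality for the Ces\`aro averaging operator in the bulk range $1<q<\infty$. The only divergence is in how that last inequality is sourced: the paper cites \cite[Theorem 2]{lfv_hardy}, whose hypotheses directly cover only $p\le q$ and must be supplemented by an observation that the left-hand side is monotone in $\alpha$ to reach $p>q$, whereas you quote the inequality with the sharp power-weight condition $\alpha q-1<q-1$ (i.e.\ $p>1$) outright, which sidesteps that bookkeeping.
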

\begin{proof}
  We drop $T$ to streamline the notation, since this is essentially a claim about sequences $(\mu_n)$.
  
  Since by definition
  \begin{equation*}
    \frac{\sigma_n}{n} = \frac{\sum_0^{n-1}\mu_i}n\ge \mu_n
  \end{equation*}
  because $(\mu_n)_n$ is non-increasing, we certainly always have $\|\cdot\|\ge \|\cdot\|'$. The substance of the claim is thus the fact that there is an inequality of the form
  \begin{equation*}
    \|\cdot\|_{p,q}\le C\|\cdot\|'_{p,q}\quad\text{for some constant } C=C_{p,q}>0 .
  \end{equation*}
  For \Cref{eq:pinfnorm'} this follows from
  \begin{equation*}
    \int_1^N x^{-\alpha}\ \mathrm{d}x = \frac{N^{1-\alpha}-1}{1-\alpha}\le C N^{1-\alpha}.
  \end{equation*}
  We can henceforth focus on the case
  \begin{equation*}
    1<p<\infty,\ 1\le q<\infty
    \quad\text{or}\quad
    p=\infty,\ q=1.
  \end{equation*}
  To dispose of the ``boundary'' cases $q=1$ first, note that
  \begin{equation*}
    \begin{aligned}
      \sum_{n\ge 1}\frac {\sigma_n}{n^{2-\alpha}}
      =
      \sum_{n\ge 1}\frac {\mu_0+\cdots+\mu_{n-1}}{n^{2-\alpha}}
      &=
        \sum_{n\ge 1}\left(\frac 1{n^{2-\alpha}}+\frac 1{(n+1)^{2-\alpha}}+\cdots\right)\mu_{n-1}\\
      &\le
        C\sum_{n\ge 1}\frac{\mu_{n-1}}{n^{1-\alpha}}
    \end{aligned}    
  \end{equation*}
  for appropriate $C>0$ because
  \begin{equation*}
    \int_N^{\infty}x^{\alpha-2}\ \mathrm{d}x=\frac{N^{\alpha-1}}{1-\alpha}. 
  \end{equation*}
  The remaining bulk of the claim now follows from \cite[Theorem 2]{lfv_hardy}, with some processing:
  \begin{itemize}
  \item that source's $p$ is our $q$ (so that it is indeed {\it strictly} larger than $1$, as \cite[Theorem 2]{lfv_hardy} requires, in the regime currently being considered);
  \item the $\alpha$ of \cite[Theorem 2]{lfv_hardy}, here, is
    \begin{equation*}
      \alpha-\frac 1q = \frac 1p - \frac 1q
    \end{equation*}
    instead;
  \item and the $\frac{a_k}{(k+1)^{\alpha}}$ of loc.cit. would be our $\mu_{k+1}$. 
  \end{itemize}
  The theorem in question functions in the range
  \begin{equation*}
    \frac 1p - \frac 1q\in \left[0,\ 1-\frac 1q\right).
  \end{equation*}
  The upper bound is no issue (we have it in any case, because $\frac 1p<1$), but the lower boundary condition
  \begin{equation*}
    \frac 1p - \frac 1q\ge 0
    \iff
    p\le q
  \end{equation*}  
  is \cite{lfv_comm}, apparently, crucial to the proof of \cite[Theorem 2]{lfv_hardy}. Note however that if \cite[equation (CWHI)]{lfv_hardy} holds for some $\alpha$, then it still does, with the same right-hand scaling constant, for {\it smaller} $\alpha$ on the left: the scaling factors
  \begin{equation*}
    \frac{(n+1)^{\alpha}}{(k+1)^{\alpha}},\ 0\le k\le n
  \end{equation*}
  appearing in the left-hand side are non-decreasing functions of $\alpha$.
\end{proof}

\begin{remarks}
  \begin{enumerate}[(1)]
  \item That \Cref{eq:pqnorm'} and \Cref{eq:pinfnorm'} are indeed quasi-norms {\it follows} from \Cref{pr:hardy}: homogeneity and non-degeneracy is not an issue, whereas the subadditivity-up-to-scaling characteristic of quasi-norms is a consequence of the honest subadditivity
    \begin{equation*}
      \|\square + \bullet\|\le \|\square\| + \|\bullet\|
    \end{equation*}
    together with the estimates
    \begin{equation*}
      C_0 \|\cdot\| \le \|\cdot\|'\le C_1 \|\cdot\|
    \end{equation*}
    provided by \Cref{pr:hardy}.

  \item It is tempting to push \Cref{eq:pinfnorm'} to its limiting case $p=\alpha=1$. Considering its $\sigma$ counterpart \Cref{eq:1infnorm} and the fact (e.g. \cite[\S 1.5]{finch_const-1}) that
    \begin{equation}\label{eq:euler}
      1+\frac 12+\cdots+\frac 1n\sim \log n,
    \end{equation}
    this suggests
    \begin{equation}\label{eq:norm1inf'cand}
      \|T\|'_{1,\infty}:=\sup_{n\ge 2} n\mu_n(T).
    \end{equation}
    While \Cref{eq:euler} does imply that we have an inequality of the form $\|\cdot\|\le C\|\cdot\|'$ (the interesting part of the proof of \Cref{pr:hardy}), in this case we cannot take for granted the {\it opposite} inequality, which in fact does not hold: choosing
    \begin{equation*}
      \mu_n=\mu_n(T)=
      \begin{cases}
        \frac 1{n_0}&\text{ for }n\le n_0\\
        0&\text{ otherwise}
      \end{cases}
    \end{equation*}
    for large $n_0$ will achieve arbitrarily small
    \begin{equation*}
      \|T\|_{1,\infty}
      =
      \sup\frac{\sigma_n(T)}{\log n}
      \sim
      \sup\frac{\mu_0+\cdots+\mu_{n-1}}{1+\cdots+\frac 1{n-1}}
    \end{equation*}
    while \Cref{eq:norm1inf'cand} is 1.

  \item The un-definability of $\cL^{1,\infty}$ by \Cref{eq:norm1inf'cand} is intimately connected to the fact that the sequence $(n^{-1})_n$ is not {\it regular} in the sense of \cite[\S III.14, 3.]{gk_lin}: its partial sums \Cref{eq:euler} are not bounded above by (some constant times) $n\cdot \frac 1n=1$.

    By contrast, and in reference to \Cref{eq:pinfnorm'}, note the remark preceding \cite[\S III.14, Theorem 14.2]{gk_lin} to the effect that $(n^{-\alpha})_n$ {\it is} regular for $0<\alpha<1$. 
  \end{enumerate}  
\end{remarks}

It will also be useful to record the inclusion relations between the various $\cL^{p,q}$. \cite[\S $4.2.\alpha$, Proposition 1]{Con94} does this (presumably for $1<p<\infty$) via a general appeal to interpolation theory, but the claim is easy to prove by an elementary direct examination of the various cases involved.

\begin{lemma}\label{le:howcontain}
  The inclusions between the various $\cL^{p,q}$ of \Cref{def:lpq} are lexicographic in $(p,q)$:
  \begin{equation*}
    \cL^{p_0,q_0}\subseteq \cL^{p_1,q_1}
  \end{equation*}
  if and only if either $p_0<p_1$ or $p_0=p_1$ and $q_0\le q_1$.  \qedhere
\end{lemma}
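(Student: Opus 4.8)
The plan is to reduce the ideal inclusions to comparisons of the defining conditions on the characteristic sequence $\mu(T)=(\mu_n(T))_n$ and then verify those by elementary power- and logarithm-counting. By \Cref{re:salinas} an inclusion $\cL^{p_0,q_0}\subseteq\cL^{p_1,q_1}$ is equivalent to a domination $\|\cdot\|_{p_1,q_1}\le C\|\cdot\|_{p_0,q_0}$ of the associated sn-functions, and since an sn-function is determined by its values on non-increasing non-negative sequences, membership in $\cL^{p,q}$ becomes a pure condition on $\mu(T)$: finiteness of $\sum_n (n+1)^{\alpha q-1}\mu_n(T)^q$ for $q<\infty$, or of $\sup_n (n+1)^{\alpha}\mu_n(T)$ for $q=\infty$, with $\alpha=1/p$ — invoking \Cref{pr:hardy} to pass from the $\sigma_n$- to the $\mu_n$-formulation wherever it applies. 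I would thus phrase the argument as a statement about arbitrary non-increasing null sequences $a=(a_n)$.

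For the ``if'' direction I would prove two generating inclusions and close under transitivity (\Cref{re:orderincl}). First, fixing $p$ with $q_0\le q_1$: from finiteness of $\sum (n+1)^{\alpha q_0-1}a_n^{q_0}$ one extracts the pointwise bound $a_n\lesssim (n+1)^{-\alpha}$ (monotonicity of $a$ gives $\sum_{k\le n}(k+1)^{\alpha q_0-1}a_k^{q_0}\gtrsim a_n^{q_0}n^{\alpha q_0}$), and then feeds $a_n^{q_1}=a_n^{q_0}a_n^{q_1-q_0}$ against this bound to recover finiteness at exponent $q_1$ (the case $q_1=\infty$ being the pointwise bound itself). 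Secondly, for $p_0<p_1$, i.e. $\alpha_1<\alpha_0$, it suffices by the first step to show $\cL^{p_0,\infty}\subseteq\cL^{p_1,1}$: from $a_n\lesssim(n+1)^{-\alpha_0}$ one gets $\sum(n+1)^{\alpha_1-1}a_n\lesssim\sum(n+1)^{\alpha_1-\alpha_0-1}<\infty$ precisely because $\alpha_1-\alpha_0<0$. Chaining $\cL^{p_0,q_0}\subseteq\cL^{p_0,\infty}\subseteq\cL^{p_1,1}\subseteq\cL^{p_1,q_1}$ then yields every lexicographically-ordered inclusion.

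For the ``only if'' direction I would exhibit explicit counterexample sequences. When $p_0=p_1=p$ and $q_0<q_1$, a sequence $a_n\sim (n+1)^{-\alpha}(\log(n+2))^{-\beta}$ with $\beta\in(1/q_1,1/q_0]$ lies in $\cL^{p,q_1}\setminus\cL^{p,q_0}$ (for $q_1=\infty$ the pure power $a_n=(n+1)^{-\alpha}$ suffices), ruling out the reverse $q$-inclusion. When $p_0<p_1$, a single pure power $a_n=(n+1)^{-\gamma}$ with $\gamma\in(\alpha_1,\alpha_0)$ lies in every $\cL^{p_1,q_1}$ yet fails to lie even in the largest space $\cL^{p_0,\infty}$ (since $\sup_n (n+1)^{\alpha_0}a_n=\infty$), simultaneously killing all reverse $p$-inclusions. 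Together with the ``if'' direction this pins the order down to exactly the lexicographic one.

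The step I expect to require the most care is the treatment of the boundary parameters $p\in\{1,\infty\}$, and above all the logarithmically-normed ideal $\cL^{1,\infty}$, where \Cref{pr:hardy} does \emph{not} apply and the naive quasi-norm $\sup_n n\mu_n$ fails to define the ideal (as the remarks after \Cref{pr:hardy} record). There one argues directly with $\sigma_n=O(\log n)$: the bridge inclusion $\cL^{1,\infty}\subseteq\cL^{p,q}$ for $p>1$ follows from the derived bound $\mu_n=O(\log n/(n+1))$ inserted into $\sum (n+1)^{(\alpha-1)q-1}(\log n)^q<\infty$ (convergent since $\alpha<1$), while the strictness $\cL^{1}\subsetneq\cL^{1,\infty}$ is witnessed by $\mu_n=(n+1)^{-1}$, for which $\sigma_n\sim\log n$ but $\sum\mu_n=\infty$. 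The endpoint cases $p=\infty$ are then covered by the same power examples with $\alpha_1=0$.
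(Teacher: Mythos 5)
Your proof is correct. Note, though, that the paper itself gives \emph{no} proof of \Cref{le:howcontain}: it declares the argument routine and instead proves the stronger \Cref{pr:pqareord}, which upgrades the forward (``if'') inclusions to the factorization relation $\cI\preceq_{\cK}\cJ$ of \Cref{eq:orderincl} for the approximable ideals $\cL^{p,q}_0$ (excluding $\cL^{1,\infty}_0$), so that \Cref{pr:ifprod} can be applied to the multiplier topologies. Your ``if'' half is essentially the sequence-level shadow of that proposition: your extraction of the pointwise bound $a_n\lesssim (n+1)^{-\alpha}$ and the chain $\cL^{p_0,q_0}\subseteq\cL^{p_0,\infty}\subseteq\cL^{p_1,1}\subseteq\cL^{p_1,q_1}$ mirror steps (I)--(III) there, minus the bookkeeping of exhibiting the multiplier $(t_n)$ in the ideal quotient --- which is exactly the extra information the paper wants and your argument does not provide (nor needs to, for the lemma as stated). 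Your ``only if'' half, via the power and power-times-log witnesses, is content the paper nowhere writes down explicitly, and it is what actually pins the ordering down to the lexicographic one; the choices $\beta\in(1/q_1,1/q_0]$ and $\gamma\in(\alpha_1,\alpha_0)$ check out, including the degenerate endpoints $\alpha_1=0$ and $q_1=\infty$. You were also right to single out $\cL^{1,\infty}$ as the one place where \Cref{pr:hardy} fails and the $\sup_n n\mu_n$ surrogate is unavailable; arguing there directly from $\sigma_n=O(\log n)$, with $\mu_n=O(\log n/(n+1))$ feeding the bridge into $\cL^{p,1}$ for $p>1$ and $\mu_n=(n+1)^{-1}$ witnessing $\cL^1\subsetneq\cL^{1,\infty}$, is exactly what is needed and matches how the paper itself treats that ideal elsewhere.
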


Rather than give the routine proof, it will be profitable to focus on a result that gives somewhat more information, rendering \Cref{pr:ifprod} applicable to the ideals $\cL^{p,q}$.

\begin{remark}\label{re:useseqs}
  Throughout the ensuing discussion we will make mostly tacit use of the general principle \cite[Theorem 2.8]{sim_tr} whereby it is enough to prove the analogous statements about ideals of {\it sequence} spaces (or, equivalently, work with operators diagonalized under a single, fixed basis).

  We will thus often abuse the notation (and language) in asserting that {\it sequences} belong to ideals such as $\cL^{p,q}$; this means that the corresponding diagonal {\it operators} do, etc.
\end{remark}

\begin{proposition}\label{pr:pqareord}
  Let
  \begin{equation*}
    (p_0,q_0) \le (p_1,q_1)\text{ lexicographically},
  \end{equation*}
  with
  \begin{equation*}
    1<p_i<\infty,\ 1\le q_i\le \infty
    \quad\text{or}\quad
    (p_i,q_i)\in \{(1,1),\ (\infty,\infty),\ (\infty,1)\}.
  \end{equation*}
  We then have
  \begin{equation}\label{eq:p01q01-approx}
      \cL_0^{p_0,q_0}\preceq_{\cK} \cL_0^{p_1,q_1}
    \end{equation}
    in the notation of \Cref{eq:orderincl}. 

    
\end{proposition}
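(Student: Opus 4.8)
The plan is to unwind the definition \Cref{eq:orderincl} of $\preceq_\cK$ and prove the single nontrivial containment $\cL_0^{p_0,q_0}\subseteq\cL_0^{p_1,q_1}\,(\cL_0^{p_0,q_0}:\cL_0^{p_1,q_1})_\cK$; the reverse containment is automatic, exactly as in the proof of \Cref{pr:ifprod}. By \Cref{re:useseqs} it suffices to treat diagonal operators, so I would fix $T\in\cL_0^{p_0,q_0}$ with nonincreasing characteristic sequence $\mu=(\mu_n)$ and seek a diagonal factorization $\mu_n=a_nb_n$ in which $(a_n)$ is the characteristic sequence of an operator $A\in\cL_0^{p_1,q_1}$ and $(b_n)$ is a null sequence whose associated diagonal operator $B$ carries $\cL_0^{p_1,q_1}$ into $\cL_0^{p_0,q_0}$, i.e. $B\in(\cL_0^{p_0,q_0}:\cL_0^{p_1,q_1})$; compactness of $B$ is then just $b_n\to 0$.

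To choose the split I would pass to the quasi-norm description of \Cref{pr:hardy}: with $\alpha_i:=1/p_i$, membership of a nonincreasing sequence in $\cL^{p_i,q_i}$ is measured by the weighted $\ell^{q_i}$-size of $(n^{\alpha_i}\mu_{n-1})$ (by a supremum when $q_i=\infty$). Setting $f_n:=n^{\alpha_0}\mu_{n-1}$, the task becomes a factorization $f_n=g_n r_n$ of a given $\ell^{q_0}$-sequence, where $g$ records the $\cL^{p_1,q_1}$-factor and $r$, after absorbing the polynomial shift $n^{\alpha_0-\alpha_1}$ coming from $p_0\le p_1$, records the multiplier $B$. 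On the $q$-side a power split $g_n=|f_n|^{q_0/q_1}$, $r_n=|f_n|^{q_0/q_2}$ with $1/q_2=1/q_0-1/q_1$ is the natural candidate, while the nonnegative gap $\alpha_0-\alpha_1\ge 0$ supplies both the extra decay making $B$ compact and, when $p_0<p_1$ strictly, the conversion of $p_1$-type decay into $p_0$-type decay. The subscript-$0$ (approximability) hypotheses enter to force the relevant sequences to be genuinely null, keeping $A$ in the approximable part and $B$ compact.

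I would then organize the verification into the generic range $1<p_i<\infty$, split according to whether $p_0=p_1$ (so that lexicographic comparability forces $q_0\le q_1$) or $p_0<p_1$, and the three boundary pairs $(1,1)$, $(\infty,1)$, $(\infty,\infty)$, which are finite in number and can be checked directly against the defining formulas \Cref{eq:pqnorm} through \Cref{eq:1infnorm}. In each case one must confirm two things: that $A\in\cL_0^{p_1,q_1}$, which should follow from the power split together with the geometric decay of the shift; and that $B$ lies in the ideal quotient, which is to be promoted from the sequence-level estimate to an honest bounded multiplication of normed ideals via \Cref{pr:multarecont}.

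The step I expect to be the real obstacle is this last one, namely verifying $B\in(\cL_0^{p_0,q_0}:\cL_0^{p_1,q_1})$. The point is that the test elements $\zeta\in\cL_0^{p_1,q_1}$ against which $B$ must be checked have \emph{monotone} characteristic sequences, so a naive pointwise multiplier bound between $\ell^{q_1}$ and $\ell^{q_0}$ is not the right tool; instead one must exploit the monotonicity built into the $\sigma_n$-description and the Hölder-type inequalities for Lorentz quasi-norms (the same circle of ideas underlying \Cref{pr:hardy} and \Cref{re:salinas}) to see that multiplication by $(b_n)$ genuinely lands in the smaller ideal. The exponent bookkeeping in the degenerate cases $q_i=\infty$, where one works with the approximable sub-ideal and the supremum description rather than a sum, is where I expect the greatest care to be required.
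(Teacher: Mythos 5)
Your proposal is correct and follows essentially the same route as the paper: reduce to sequences of characteristic numbers via \Cref{re:useseqs}, then factor $\mu_{n-1}=a_nb_n$ with a H\"older-type power split (conjugate exponents over the measure assigning $n$ the weight $\frac1n$) handling the $q$-direction and the polynomial gap $n^{\alpha_1-\alpha_0}$ handling the $p$-direction, H\"older's inequality then certifying that the second factor lies in the ideal quotient and its decay to $0$ giving compactness. The only organizational difference is that the paper uses transitivity of $\preceq_{\cK}$ to chain through the intermediate ideals $\cL_0^{p_0,\infty}$ and $\cL^{p_1,1}$ rather than performing one combined factorization, which keeps each exponent computation to a single variable at a time.
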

\begin{proof}
  Recall from \Cref{res:whennot0} \Cref{item:whenneed0} that only when $q=\infty$ do we have a distinction between $\cL$ and $\cL_0$, so we will occasionally drop the `0' subscript. The transitivity of `$\preceq_{\cK}$' makes it sufficient to argue in stages.



  \begin{enumerate}[(I)]
  \item\label{item:qto1} {\bf : $\cL^{p,1}\preceq_{\cK} \cL_0^{p,q}$.} When $p=\infty$ this is simply
    \begin{equation*}
      \cL^{\infty,1}
      \preceq_{\cK}
      \cL_0^{\infty,\infty} = \cL_0^{\infty} = \cK(H),
    \end{equation*}
    which follows, say, from the characterization of approximability given in \Cref{th:multcl}.


    Because furthermore $q=\infty$ is covered by \Cref{item:inftoq} below for $p>1$ and avoided by fiat for $p=1$, we can assume
    \begin{equation*}
      1<p,q<\infty.
    \end{equation*}
    (there being nothing to prove for $q=1$). Suppose, then, that the positive sequence $(\mu_n)$ satisfies
    \begin{equation*}
      \sum_{n\ge 1}\frac{n^{\alpha}\mu_{n-1}}{n}<\infty
    \end{equation*}
    so that it belongs to $\cL^{p,1}$ (in the sense of \Cref{re:useseqs}) by \Cref{pr:hardy}. We can now write
    \begin{equation}\label{eq:havefact}
      \mu_{n-1} = s_n t_n
      ,\quad
      s_n:= n^{\alpha\frac{1-q}q}\mu_{n-1}^{\frac 1q}
      ,\quad
      t_n:=n^{\alpha\frac{q-1}q}\mu_{n-1}^{\frac{q-1}q}.
    \end{equation}
    Note that
    \begin{equation*}
      \sum \frac{\left(n^{\alpha}s_n\right)^q}n
      =
      \sum \frac{n^{\alpha}\mu_{n-1}}n
      <\infty
    \end{equation*}
    so that $(s_n)\in \cL^{p,q}$. On the other hand, for the {\it conjugate exponent} \cite[Definition 3.4]{rud_rc} $q':=\frac{q}{q-1}$ we also have
    \begin{equation*}
      \sum\frac{t_n^{q'}}n
      =
      \sum \frac{n^{\alpha}\mu_{n-1}}n
      <\infty
    \end{equation*}
    so that $(t_n)\in (\cL^{p,1}:\cL^{p,q})$ by the {\it H\"older inequality} \cite[Theorem 3.5]{rud_rc} for the measure space $\bZ_{>0}$ with $n\in \bZ_{>0}$ given weight $\frac 1n$. The factorization \Cref{eq:havefact} thus settles the present case. 
    
  \item\label{item:p0p1} {\bf : $\cL_0^{p_0,\infty} \preceq_{\cK} \cL^{p_1,1}\text{ for }1 < p_0 < p_1$.}

    The cases in question are representable uniformly (taking \Cref{re:useseqs} into account and shifting the index of the positive sequence $(\mu_n)$ for convenience):
    \begin{equation*}
      (\mu_n)\in \cL_0^{p_0,\infty} \xLeftrightarrow{\quad} n^{\alpha_0}\mu_n\xrightarrow[n]{\quad}0,\ \text{i.e. $\mu_n=o(n^{-\alpha_0})$}
    \end{equation*}
    and
    \begin{equation*}
      (s_n)\in \cL^{p_1,1} \xLeftrightarrow{\quad} \sum_{n\ge 1}\frac {n^{\alpha_1}s_{n}}n<\infty,
    \end{equation*}
    with $\alpha_i:=\frac 1{p_i}$.

    Factor
    \begin{equation}\label{eq:ust}
      \cL_0^{p_0,\infty} \ni \mu_n = s_n\cdot t_n
      \quad\text{with}\quad
      n^{\alpha_0-\alpha_1}t_n = \frac{n^{\alpha_0}\mu_n}{n^{\alpha_1}s_n} = o(1)
    \end{equation}
    while at the same time
    \begin{equation*}
      \sum\frac{n^{\alpha_1}s_n}n<\infty\xRightarrow{\quad} (s_n)\in \cL^{p_1,1}. 
    \end{equation*}
    We then have
    \begin{align*}
      (y_n)\in \cL^{p_1,1} &\xRightarrow[\quad]{} y_n=o(n^{-\alpha_1})\\
                           &\xRightarrow[\quad]{\text{\Cref{eq:ust}}} n^{\alpha_0} y_n t_n=o(1)\\
                           &\xRightarrow[\quad]{} y_n t_n = o(n^{-\alpha_0}),\\
    \end{align*}
    meaning that $(t_n)\in (\cL_0^{p_0,\infty}:\cL^{p_1,1})$.

    
  \item\label{item:inftoq} {\bf : $\cL^{p,q} \preceq_{\cK} \cL_0^{p,\infty}$ for $1\le q<\infty$.} The cases $p=1,\infty$ are prohibited and covered by \Cref{item:qto1} above respectively (since then $q=1,\infty$ as well, etc.), so we can assume that $p$ is in the bulk range $(1,\infty)$. 

    The rest is virtually tautological. Consider a non-increasing positive sequence $(\mu_n)_n$ satisfying the defining property of $\cL^{p,q}$ (cf. \Cref{eq:pqnorm'}), shifted for convenience so as to disregard the $0^{th}$ term:
    \begin{equation*}
      \sum_{n\ge 1}\frac{\left(n^{\alpha}\mu_{n}\right)^q}{n} <\infty
    \end{equation*}
    (positivity is harmless: if $\mu_n$ is eventually 0 then the claim is even easier to prove).

    The bounded sequence $(t_n):=\left(n^{\alpha} \mu_n\right)$ belongs to $(\cL^{p,q}:\cL_0^{p,\infty})$ (\Cref{re:useseqs} being operative), because multiplication by it converts any $O(n^{-\alpha})$ sequence (in {\it big-oh notation} \cite[\S 3.2]{clrs_alg-4e}) into $(\mu_n)\in \cL^{p,q}$. The ratio $\frac{\mu_n}{t_n}$ satisfies
    \begin{equation*}
      n^{\alpha}\frac{\mu_n}{t_n}=o(1)
      \Longrightarrow
      \frac{\mu_n}{t_n}\in \cL_0^{p,\infty},
    \end{equation*}
    so that
    \begin{equation*}
      (\mu_n) = \left(\frac{\mu_n}{t_n}\right)\cdot (t_n)
      \in \cL_0^{p,\infty}\cdot (\cL^{p,q}:\cL_0^{\infty}).
    \end{equation*}

  \item {\bf : $\cL^{1} \preceq_{\cK} \cL^{p,1}$ for $1<p<\infty$.} Had the statement not forbidden the use of $\cL_0^{1,\infty}$, the transition could have been effected in two stages, via \Cref{item:p0p1} and \Cref{item:inftoq}. As things stand, the sequence-based argument still goes through if in place of
    \begin{equation*}
      \cL_0^{1,\infty}
      \xlrsquigarrow{\qquad}
      \sigma_n=o(\log n)
    \end{equation*}
    we use the sequence space defined by $\mu(n)=o(n^{-1})$.
  \end{enumerate}
\end{proof}

Returning to the issue of comparing topologies, the phenomenon of coincidence on bounded sets will again be familiar from one's acquaintance with the ($\sigma$-)strong topologies (e.g. \cite[Lemma II.2.5]{tak1}, \cite[Theorem II.7]{ake_dual}, etc.). Recall from \Cref{def:sn} that sn-ideals are, by default, assumed to be contained in the ideal $\cK$ of compact operators. 

\begin{theorem}\label{th:topssamebdd}
  Let $H$ be a Hilbert space, $\cB:=\cB(H)$, $\cK:=\cK(H)$ the ideal of compact operators, and $\cI\trianglelefteq \cK$ a generic sn-ideal.

  \begin{enumerate}[(1)]

  \item\label{item:topssamebdd} $\mu_{\cI}$ and $\mu^*_{\cI}$ all have the same bounded sets (namely the norm-bounded ones).

  \item\label{item:topssamebdd-all} For approximable $\cI$ the multiplier topologies $\mu_{\cI}$ all coincide on bounded subsets of $\cB$ with the $\sigma$-strong topology $\mu_2$ of \Cref{not:mutop}.

  \item\label{item:topssamebdd-all*} Similarly, for approximable $\cI$ all $\mu_{\cI}^*$ coincide on bounded sets with the $\sigma$-strong$^*$-topology $\mu_2^*$.

  \item\label{item:topssamebdd-fn} If $\cI$ is approximable and
    \begin{equation}\label{eq:l1getback}
      \cL^1 = \cI (\cL^1:\cI)
    \end{equation}
    then $\mu_{\cI}$ and $\mu_{\cI}^*$ have the same continuous dual $\cL^1\cong \cB_*$ (i.e. the weak$^*$-continuous functionals on $\cB$).

  \end{enumerate}
\end{theorem}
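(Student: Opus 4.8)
The plan is to establish the four assertions in order, letting the boundedness statement (1) underwrite the meaning of ``coincide on bounded sets'' in (2)--(3) and isolating the dual computation (4) as the real content.

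For (1), since $\mu^*_{\cI}$ is finer than $\mu_{\cI}$ it is enough to check that $\mu_{\cI}$-bounded sets are norm-bounded and that norm-bounded sets are $\mu^*_{\cI}$-bounded. The second is immediate from the symmetric-norm estimate \eqref{eq:axbaxb}: if $\|x\|\le M$ then $\vvvert xx_{\cI}\vvvert,\ \vvvert x_{\cI}x\vvvert\le M\vvvert x_{\cI}\vvvert$ for every $x_{\cI}\in\cI$. For the first I would test the seminorms \eqref{eq:muli} against rank-one operators $\ket{\xi}\bra{\eta}$ (which lie in $\cI$, since every nonzero ideal contains $\cF(H)$): then $x\ket{\xi}\bra{\eta}=\ket{x\xi}\bra{\eta}$ is again rank one, so \eqref{eq:rk1} gives $\vvvert x\ket{\xi}\bra{\eta}\vvvert=\|x\xi\|\,\|\eta\|$. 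Hence a $\mu_{\cI}$-bounded $B$ has $\sup_{x\in B}\|x\xi\|<\infty$ for every $\xi$, and the uniform boundedness principle forces $\sup_{x\in B}\|x\|<\infty$.

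For (2) and (3), statement (1) tells us the bounded sets are the norm-bounded ones for all the topologies involved, so it suffices to compare the topologies on uniformly norm-bounded nets, and after passing to differences to compare convergence to $0$. I would show that for $\|y_{\lambda}\|\le M$ one has $y_{\lambda}\to 0$ in $\mu_{\cI}$ iff $y_{\lambda}\to 0$ strongly: the rank-one identity $\vvvert y_{\lambda}\ket{\xi}\bra{\eta}\vvvert=\|y_{\lambda}\xi\|\,\|\eta\|$ gives one direction, and approximability gives the other, since for $x_{\cI}\in\cI$ and a finite-rank $F=\sum_i\ket{\xi_i}\bra{\eta_i}$ with $\vvvert x_{\cI}-F\vvvert<\varepsilon$ one estimates
\begin{equation*}
  \vvvert y_{\lambda}x_{\cI}\vvvert\le M\varepsilon+\vvvert y_{\lambda}F\vvvert\le M\varepsilon+\sum_i\|y_{\lambda}\xi_i\|\,\|\eta_i\|,
\end{equation*}
whose right side tends to $M\varepsilon$, and then $\varepsilon\to 0$. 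As $\cL^2$ is itself approximable, the same equivalence holds for $\mu_2$ (this recovers the classical coincidence of $\sigma$-strong and strong on bounded sets, cf. \cite[Lemma II.2.5]{tak1}), giving (2); statement (3) is identical after recording the right-hand version $\vvvert\ket{\xi}\bra{\eta}y_{\lambda}\vvvert=\|\xi\|\,\|y^*_{\lambda}\eta\|$, which turns $\mu^*_{\cI}$-convergence of bounded nets into strong$^*$ convergence, matching $\mu^*_2$.

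The crux is (4). Because $\mu_{\cI}\preceq\mu^*_{\cI}$ we have $(\cB,\mu_{\cI})^*\subseteq(\cB,\mu^*_{\cI})^*$, so it suffices to prove the two outer containments $\cL^1\subseteq(\cB,\mu_{\cI})^*$ and $(\cB,\mu^*_{\cI})^*\subseteq\cL^1$. For the first, I would invoke the hypothesis \eqref{eq:l1getback} to factor a given $\rho\in\cL^1$ as a finite sum $\rho=\sum_j a_jw_j$ with $a_j\in\cI$ and $w_j\in(\cL^1:\cI)$; by \Cref{pr:multarecont} right multiplication by $w_j$ sends $(\cI,\vvvert\cdot\vvvert)$ continuously into $(\cL^1,\|\cdot\|_1)$, so for suitable constants $C_j$,
\begin{equation*}
  \left|\mathrm{tr}(x\rho)\right|=\left|\sum_j\mathrm{tr}\big((xa_j)w_j\big)\right|\le\sum_j\big\|(xa_j)w_j\big\|_1\le\sum_j C_j\,\vvvert xa_j\vvvert,
\end{equation*}
a finite sum of seminorms \eqref{eq:muli}, so the normal functional $x\mapsto\mathrm{tr}(x\rho)$ is $\mu_{\cI}$-continuous. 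For the second containment, \Cref{cor:allweakerthanmuinf} gives $\mu^*_{\cI}\preceq\mu^*_{\cK}$ for approximable $\cI$, hence $(\cB,\mu^*_{\cI})^*\subseteq(\cB,\mu^*_{\cK})^*$; and $\mu^*_{\cK}=\mu^*_{\infty\mid 0}$ is the strict topology on $\cB\cong M(\cK)$ (\Cref{re:howprevtops}), whose continuous dual is $\cK^*\cong\cL^1$. Chaining the inclusions collapses all the duals to $\cL^1\cong\cB_*$.

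I expect the main obstacle to be the lower containment in (4): it is precisely the factorization hypothesis \eqref{eq:l1getback} that lets one rewrite an arbitrary normal functional so that it is visibly dominated by finitely many multiplier seminorms, and getting the trace pairing to land inside the $\mu_{\cI}$-seminorm family hinges on the automatic continuity of ideal multiplication (\Cref{pr:multarecont}). The upper containment, by contrast, is best imported from the already-understood strict topology rather than analyzed directly; the only remaining care is bookkeeping, making sure ``coincidence on bounded sets'' is read as agreement of relative topologies, which reduces to the net statements above because, by (1), all topologies in play share the same bounded sets.
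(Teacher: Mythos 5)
Your proposal is correct and follows essentially the same route as the paper: part (1) via the rank-one normalization \Cref{eq:rk1} plus uniform boundedness and \Cref{eq:axbaxb}, parts (2)--(3) via density of finite-rank operators reducing everything on bounded sets to strong($^*$) convergence, and part (4) by sandwiching the dual between $\cL^1$ (using the factorization \Cref{eq:l1getback} and \Cref{pr:multarecont}) and the dual of the strict/Mackey topology $\mu^*_{\cK}$ via \Cref{cor:allweakerthanmuinf}. The only cosmetic difference is that in (4) you compute the two endpoint duals explicitly where the paper invokes the weak$^*$/Mackey characterization of topologies with dual $\cB_*$; the underlying facts are the same.
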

\begin{proof}
  
  \begin{enumerate}[]

  \item {\bf \Cref{item:topssamebdd}} Condition \Cref{eq:rk1} shows that $\mu_{\cI}$- or $\mu_{\cI}^*$-boundedness entails pointwise boundedness on $H$, hence norm-boundedness \cite[\S IV.2, Corollary 2]{rr_tvs}. The converse, on the other hand, follows from \Cref{eq:axbaxb}.

  \item {\bf \Cref{item:topssamebdd-all}} The $\sigma$-strong topology $\mu_2$ is one of the $\mu_I$, hence the last claim (assuming the rest).

    To prove the main assertion, note first that the ideal $\cF=\cF(H)\subset \cB(H)$ of finite-rank operators is contained in all $\cI$ under consideration and by assumption dense for each of the corresponding symmetric norms $\|\cdot\|_{\cI}$. It follows that on bounded subsets of $\cB(H)$ the left-multiplier topology induced by $\|\cdot\|_{\cI}$ on $\cF$ coincides with that induced by multiplication on all of $\cI$ (as in \cite[Lemma 2.3.6]{wo}, say).
    
    Observe next that the topologies induced on every
    \begin{equation*}
      \cF_n:=\{\text{operators of rank }\le n\}
    \end{equation*}
    by the norms $\|\cdot\|_{\cI}$ all coincide for every individual $n$, and each $\cF_n$ is closed under left and right multiplication by arbitrary $T\in \cB(H)$.
    
  \item {\bf \Cref{item:topssamebdd-all*}} follows immediately from \Cref{item:topssamebdd-all}, since each $\mu^*$ is the weakest locally convex topology stronger than both the corresponding $\mu$ and the latter's image under the $*$ operator.

  \item {\bf \Cref{item:topssamebdd-fn}} We have the strength-comparability relations
    \begin{equation*}
      \mu_1\preceq \mu_{\cI}\preceq \mu_{\cI}^*\preceq \mu_{\cK}^*=\text{Mackey}:
    \end{equation*}
    the first by \Cref{eq:l1getback} and \Cref{pr:ifprod}, the second generically (every $\mu$ being at most as strong as its corresponding $\mu^*$), the third by the approximability assumption and \Cref{cor:allweakerthanmuinf}, and the last equality by \cite[Corollary 2.8]{tay_strict}.

    Because $\mu_1$ is the left-multiplier topology induced by multiplication on $\cL^1(H)\cong \cB(H)_*$, it is stronger than the weak$^*$ topology on $\cB$, induced \cite[Theorem I.8.6.1]{blk_oa} by the seminorms
    \begin{equation*}
      \cB\ni T\xmapsto{\quad\omega_X\quad}\left|\mathrm{tr}~TX\right|,\ X\in \cL^1.
    \end{equation*}
    Since the weak$^*$ and Mackey topologies are, respectively, the weakest and strongest \cite[\S 21.4 (3)]{koeth_tvs-1} locally convex topologies with continuous dual $\cB_*\cong \cL^1$, the conclusion follows.
  \end{enumerate}
\end{proof}

The approximability assumption is crucial to items \Cref{item:topssamebdd-all} and \Cref{item:topssamebdd-all*} of \Cref{th:topssamebdd}:

\begin{example}\label{ex:needapprox}
  Consider an orthonormal basis $(e_n)_{n\in \bZ_{\ge 0}}$ for a Hilbert space $H$, and the operators
  \begin{equation*}
    E_n:=\text{projection on }\mathrm{span}\{e_i,\ 0\le i\le n\}.
  \end{equation*}
  The (bounded!) sequence $(E_n)$ converges to the identity strongly$^*$ and hence in the topology $\mu_2^*$, but not, say, in the topology $\mu_{1,\infty}$ induced by \Cref{eq:1infnorm}:
  \begin{equation*}
    (1-E_n)
    \cdot
    \mathrm{diag}\left(1,\ \frac 12,\ \cdots,\ \frac 1n,\ \cdots\right)
    =
    \mathrm{diag}\left(0,\ \cdots,\ 0,\ \frac 1{n+1},\ \frac 1{n+2},\ \cdots\right)
  \end{equation*}
  does not $\|\cdot\|_{1,\infty}$-converge to 0.
\end{example}

\begin{remark}\label{re:needapprox}
  \Cref{ex:needapprox} also shows that one cannot, in general, expect comparability relations $\mu_{\cI}\preceq \mu_{\cJ}$ (in the style of \Cref{pr:ifprod}) in general, for sn-ideal inclusions $\cI\subseteq \cJ$: the example addresses this for $\cI=\cL^{1,\infty}$ and $\cJ=\cK$.
\end{remark}

As it happens, \Cref{th:topssamebdd} \Cref{item:topssamebdd-fn} also requires approximability crucially:

\begin{example}\label{ex:dixtr}
  Consider any one of the {\it Dixmier traces} $\mathrm{Tr}_{\omega}$ on $\cL^{1,\infty}=\cL^{1,\infty}(H)$ in \cite[\S $4.2.\beta$, Definition 2]{Con94}: these are easily seen to be $\|\cdot\|_{1,\infty}$-continuous, but are not weak$^*$-continuous and vanish on the approximable portion $\cL_0^{1,\infty}\subset \cL^{1,\infty}$.
  
  One can then produce a non-weak$^*$-, $\mu^*_{1,\infty}$-continuous functional on $\cB=\cB(H)$ by
  \begin{equation*}
    \cB\ni X\xmapsto[]{\quad \varphi_{\omega,T}\quad} \mathrm{Tr}_{\omega} TX,\quad T\in \cL^{1,\infty}:    
  \end{equation*}
  the $\mu^*_{1,\infty}$-continuity follows from that of multiplication by $T$ and the $\|\cdot\|_{1,\infty}$-continuity of $\mathrm{Tr}_{\omega}$, while the lack of weak$^*$-continuity can be ensured by taking $T$ outside the kernel of $\mathrm{Tr}_{\omega}$, so that
  \begin{itemize}
  \item $\varphi_{\omega,T}$ vanishes on the weak$^*$-dense ideal $\cL_0^{1,\infty}$ of $\cB$;
  \item but not on all of $\cB$ itself, because it does not vanish on $1$. 
  \end{itemize}
\end{example}

\Cref{pr:ifprod} leverages an ideal inclusion into an ordering for the corresponding multiplier topologies. The following partial converse generalizes (and gives some surrounding context for) \Cref{ex:yeadex}: it specializes back to the latter upon selecting $\cI:=\cK$ and $\cJ:=\cL^2$.

\begin{theorem}\label{th:yead}
  For two approximable sn-ideals $\cI,\cJ\trianglelefteq \cK(H)$ we have
  \begin{equation*}
    \cI\not\subseteq \cJ\xRightarrow{\quad} \mu_{\cI}\not\preceq \mu^*_{\cJ}. 
  \end{equation*}
\end{theorem}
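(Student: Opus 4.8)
The plan is to argue contrapositively, refuting $\mu_{\cI}\preceq\mu_{\cJ}^*$ through a single $\mu_{\cI}$-seminorm that cannot be $\mu_{\cJ}^*$-continuous. Suppose $\mu_{\cI}\preceq\mu_{\cJ}^*$ and fix any $A\in\cI$. Then the $\mu_{\cI}$-continuous seminorm $x\mapsto\|xA\|_{\cI}$ would be $\mu_{\cJ}^*$-continuous, so by the local convexity of $\mu_{\cJ}^*$ (domination of a continuous seminorm by a finite maximum of basic ones) there would be finitely many $B_1,\dots,B_m\in\cJ$ and a constant $C>0$ with
$\|xA\|_{\cI}\le C\max_j\{\|xB_j\|_{\cJ},\ \|B_jx\|_{\cJ}\}$ for all $x\in\cB$.
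I would choose $A\in\cI\setminus\cJ$ — possible \emph{precisely} because $\cI\not\subseteq\cJ$ — and then, for every such finite family and constant, produce unbounded operators $x$ violating this inequality. This generalizes \Cref{ex:yeadex}, which is the case $\cI=\cK$, $\cJ=\cL^2$, $A=\mathrm{diag}(n^{-1/2})\in\cK\setminus\cL^2$.

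\textbf{Witness and the key lemma.} Take $A\ge 0$ diagonal in a fixed orthonormal basis, with characteristic numbers $\mu_n(A)$, lying in $\cI\setminus\cJ$; writing $\Phi,\Psi$ for the sn-functions of $\cI,\cJ$ (\Cref{re:salinas}) this means $\Phi((\mu_n(A)))<\infty$ while $\Psi((\mu_n(A)))=\infty$. The one genuinely analytic ingredient I would isolate first is an approximability lemma: \emph{for any sequence of mutually orthogonal projections $E_k$ and any $B\in\cJ$ one has $\|E_kB\|_{\cJ}\to 0$ and $\|BE_k\|_{\cJ}\to 0$.} This follows by approximating $B$ within $(\cJ,\|\cdot\|_{\cJ})$ by a finite-rank $F$, noting $\|E_kF\|\to 0$ in operator norm (compactness of $F$, as $E_k\to 0$ strongly), and using the elementary estimate $\|Y\|_{\cJ}\le\|Y\|\cdot\Psi(\underbrace{1,\dots,1}_{\mathrm{rk}\,Y},0,\dots)$ for finite-rank $Y$ (a consequence of \Cref{eq:axbaxb} and \Cref{eq:rk1}). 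This is the abstract counterpart of the square-summability $\sum_k\|E_kB\|_2^2\le\|B\|_2^2$ exploited in \Cref{ex:yeadex}.

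\textbf{Construction and reduction.} For the witnesses I would take block projections $E_k$ (blocks of consecutive coordinates in the $A$-basis, of growing rank) and scalars $\theta_k$ fixed by $\theta_k\|E_kA\|_{\cI}=\delta$ for a constant $\delta>0$; since $A\in\cI=\cI_0$ has $\|E_kA\|_{\cI}\to 0$, the operators $x_k:=\theta_kE_k$ are necessarily unbounded. They \emph{must} be: by \Cref{th:topssamebdd} all these multiplier topologies agree on norm-bounded sets with $\mu_2$ or $\mu_2^*$, and $\mu_2\preceq\mu_2^*$, so any family separating $\mu_{\cI}$ from $\mu_{\cJ}^*$ lives outside the bounded sets. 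As $\|x_kA\|_{\cI}=\delta$ for every $k$, the assumed inequality would force $\liminf_k\theta_k\max_j\|E_kB_j\|_{\cJ}>0$. Thus everything reduces to the single assertion
$$\liminf_k\ \theta_k\,\|E_kB\|_{\cJ}=0\qquad\text{for every }B\in\cJ,$$
the finitely many $B_j$ being handled simultaneously by a weighted-summation device: if one can arrange weights $w_k>0$ with $\sum_k w_k=\infty$ and $\sum_k w_k(\theta_k\|E_kB\|_{\cJ})^s<\infty$ for all $B\in\cJ$ and some fixed $s$, then $\sum_k w_k\max_j(\theta_k\|E_kB_j\|_{\cJ})^s<\infty$ as well, forcing $\liminf_k\max_j\theta_k\|E_kB_j\|_{\cJ}=0$. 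For $\cI=\cK,\cJ=\cL^2$ this is exactly $s=2$, $w_k=1/k$.

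\textbf{Main obstacle.} I expect the delicate point to be this liminf estimate: the factors $\theta_k\to\infty$ and $\|E_kB\|_{\cJ}\to 0$ race against one another, and a naive geometric scaling $\theta_k\sim 2^k$ is defeated by rapidly-decaying $B\in\cJ$, so the block sizes and the $\theta_k$ must be tuned to $A$ via the sn-function gap $\Psi\not\le C\Phi$ furnished by \Cref{re:salinas}. The essential leverage is that $A\notin\cJ$: were $\liminf_k\theta_k\|E_kB\|_{\cJ}>0$ for some $B\in\cJ$, the resulting lower bounds $\|E_kB\|_{\cJ}\gtrsim\|E_kA\|_{\cI}$, assembled across the orthogonal blocks through the decomposition $|B|^2=\sum_k|E_kB|^2$ (the general substitute for the $\cL^2$-identity), should yield a positive element of $\cJ$ dominating the characteristic numbers of $A$; monotonicity of the symmetric norm $\Psi$ would then give $\Psi((\mu_n(A)))<\infty$, i.e.\ $A\in\cJ$, contradicting the choice of $A$. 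Combining this domination obstruction with the harmonic-series mechanism ($\sum_k 1/k=\infty$) of \Cref{ex:yeadex} is what I anticipate will force the liminf to vanish and so complete the refutation.
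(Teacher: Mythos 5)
Your proposal reproduces the overall architecture of the paper's proof --- scaled mutually orthogonal projections $x_k=\theta_kE_k$ with $\theta_k\to\infty$ calibrated to the non-inclusion, a fixed $\mu_{\cI}$-seminorm bounded below along the family, and clustering of the family at $0$ for $\mu^*_{\cJ}$ --- and your preliminary observations (the seminorm-domination reformulation of $\mu_{\cI}\preceq\mu^*_{\cJ}$, the lemma that $\vvvert E_kB\vvvert_{\cJ}\to 0$ for approximable $\cJ$, the necessity of norm-unbounded witnesses) are all correct. But the argument stops exactly at the point you yourself label the main obstacle, namely $\liminf_k\theta_k\vvvert E_kB\vvvert_{\cJ}=0$ for every $B\in\cJ$, and the mechanism you sketch for it does not work. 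From $\vvvert E_kB\vvvert_{\cJ}\gtrsim\vvvert E_kA\vvvert_{\cI}$ you propose to assemble, via $|B|^2=\sum_k|E_kB|^2$, a positive element of $\cJ$ dominating the characteristic numbers of $A$. The summands $|E_kB|^2=B^*E_kB$ are not supported on mutually orthogonal subspaces, so the singular values of $B$ are not the sorted concatenation of those of the blocks $E_kB$; and even granting such a decomposition, a block-norm inequality $\Psi(\mu(E_kB))\ge c\,\Phi(\mu(E_kA))$ compares a $\Psi$-quantity with a $\Phi$-quantity and yields no pointwise domination $\mu_n(B)\gtrsim\mu_n(A)$. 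The additivity $\|B\|_2^2=\sum_k\|E_kB\|_2^2$ driving Yeadon's argument is genuinely special to $\cL^2$. Likewise the weighted-summation device is asserted rather than constructed: no $w_k$ and $s$ are produced, and there is no reason a single choice works uniformly over all $B\in\cJ$ for arbitrary sn-ideals.

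The paper closes this gap by a different and more direct device, which is the one ingredient your proposal is missing. It normalizes by the characteristic numbers themselves: taking $(\mu_n)\in\mathrm{dom}\,\Phi_{\cI}\setminus\mathrm{dom}\,\Phi_{\cJ}$ and $x_n=\mu_n^{-1}E_n$, it argues that a uniform lower bound $\vvvert\mu_n^{-1}TE_n\vvvert_{\cJ}\ge C$ for all large $n$ would force the \emph{single} operator $T_{\ge n_0}:=T\sum_{n\ge n_0}E_n\in\cJ$ to have characteristic numbers dominating $(C\mu_n)_{n\ge n_0}$, contradicting $(\mu_n)\notin\mathrm{dom}\,\Phi_{\cJ}$ by \cite[\S III.4, Theorem 4.3]{gk_lin}; the lower bound in $\mu_{\cI}$ then comes for free from $\mu_n^{-1}E_nD\xi_n=\xi_n$ with $D=\mathrm{diag}(\mu_n)\in\cI$, i.e.\ from an operator-norm estimate rather than from your normalization $\theta_k\vvvert E_kA\vvvert_{\cI}=\delta$ (which, by tying $\theta_k$ to a $\Phi$-norm of a block rather than to a characteristic number, makes even this route less accessible). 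In short: the skeleton matches the paper's, but the single genuinely analytic step of the theorem is left unproved, and the heuristic offered for it is incorrect.
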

\begin{proof}
  We may as well assume the underlying Hilbert space $H$ is infinite-dimensional, or there is nothing to prove.
  
  Let $\Phi_{\cI}$ and $\Phi_{\cJ}$ be the sn-functions corresponding to the two ideals. The assumed non-inclusion entails the existence of a non-increasing positive $c_0$-sequence $(\mu_n)_{n\in \bZ_{\ge 0}}$ belonging to the domain of $\Phi_{\cI}$ but not that of $\Phi_{\cJ}$ (\Cref{re:salinas}).

  As in \Cref{ex:yeadex}, let $E_n$, $n\in \bZ_{\ge 0}$ be (non-zero) mutually-orthogonal projections on $H$ and $T\in \cJ$. We cannot have
  \begin{equation}\label{eq:alllarge}
    \left\vvvert \frac 1{\mu_n} TE_n\right\vvvert_{\cJ}\ge C>0,\ \forall \text{ large } n,
  \end{equation}
  because in that case the operator
  \begin{equation*}
    T_{\ge n_0}:=\sum_{n\ge n_0}TE_n\text{ (strong limiting sum) for some }n_0
  \end{equation*}
  would
  \begin{itemize}
  \item belong to $\cJ$, being a product between $T$ and a projection $\sum_{n\ge n_0}E_n$;
  \item while at the same time having (some) characteristic numbers that dominate $C\mu_{n}$, $n\ge n_0$.
  \end{itemize}
  The latter condition means by assumption that some sequence of characteristic numbers of $T_{\ge n_0}$ lies outside the domain of $\Phi_{\cJ}$, and hence \cite[\S III.4, Theorem 4.3]{gk_lin} $T_{\ge n_0}\not\in \cJ$. The contradiction shows that indeed \Cref{eq:alllarge} does not obtain, and hence $\{E_n\}_n$ has $0$ as a $\mu_{\cJ}^*$-cluster point (there is no distinction here between $\mu$ and $\mu^*$, the $E_n$ being self-adjoint).
  
  On the other hand, again mimicking \Cref{ex:yeadex}, a diagonal operator $D$ scaling a unit vector $\xi_n\in\mathrm{range}(E_n)$ by $\mu_n$ belongs to $\cI$, and
  \begin{equation*}
    \frac 1{\mu_n}E_n D\xi_n = \xi_n
  \end{equation*}
  shows that $\left\{\frac 1{\mu_n}E_n\right\}_n$ does {\it not} have $0$ as a $\mu_{\cI}$-cluster point. 
\end{proof}

Collecting a number of the preceding general results, we now have the following lattice of multiplier topologies (recall \Cref{not:mutop}).

\begin{theorem}\label{th:lpqmultops}
  Let 
  \begin{equation}\label{eq:pqrng}
    (p,q)
    \in
    (1,\infty)\times [1,\infty]
    \ \sqcup\ 
    \{(1,1),\ (\infty,\infty),\ (\infty,1)\}.
  \end{equation}
  \begin{enumerate}[(1)]
    
  \item\label{item:strord} The topologies $\mu_{p,q\mid 0}$ are ordered strictly by strength, increasingly and lexicographically with \Cref{eq:pqrng}. The same goes for the topologies $\tensor[_r]{\mu}{_{p,q\mid 0}}$ and $\mu_{p,q\mid 0}^*$.

    In fact, 
    \begin{equation*}
      (p_0,q_0) \lneq (p_1,q_1)
      \xRightarrow{\quad}
      \mu_{p_1,q_1},\ \tensor[_r]{\mu}{_{p_1,q_1}}
      \npreceq
      \mu^*_{p_0,q_0}.
    \end{equation*}

  \item\label{item:pqsameonbdd} All $\mu_{p,q\mid 0}$ coincide on bounded subsets of $\cB(H)$, as do all $\tensor[_r]{\mu}{_{p,q\mid 0}}$ and all $\mu_{p,q\mid 0}^*$.

  \item\label{item:pqsamefunc} $\mu_{p,q\mid 0}$, $\tensor[_r]{\mu}{_{p,q\mid 0}}$ and $\mu_{p,q\mid 0}^*$ all have the same (norm-)bounded sets and the same (weak$^*$-)continuous functionals.
    
  \end{enumerate}
\end{theorem}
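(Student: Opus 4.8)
The plan is to assemble the machinery developed over the preceding pages: the three parts of the statement correspond, respectively, to \Cref{pr:ifprod}, \Cref{th:topssamebdd} and \Cref{th:yead}, glued together by the ordering result \Cref{pr:pqareord}. For the increasing half of part \Cref{item:strord}, I would start from the lexicographic relation $(p_0,q_0)\le (p_1,q_1)$ and invoke \Cref{pr:pqareord} to obtain $\cL_0^{p_0,q_0}\preceq_{\cK}\cL_0^{p_1,q_1}$, i.e. $\cL_0^{p_0,q_0}\subseteq \cL_0^{p_1,q_1}(\cL_0^{p_0,q_0}:\cL_0^{p_1,q_1})$. Feeding this into \Cref{pr:ifprod} immediately yields
\[
\mu_{p_0,q_0\mid 0}\preceq \mu_{p_1,q_1\mid 0},\quad \tensor[_r]{\mu}{_{p_0,q_0\mid 0}}\preceq \tensor[_r]{\mu}{_{p_1,q_1\mid 0}},\quad \mu^*_{p_0,q_0\mid 0}\preceq \mu^*_{p_1,q_1\mid 0},
\]
the monotone (non-strict) ordering of all three families at once.

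Strictness and the sharper non-domination ``in fact'' clause come from the converse engine \Cref{th:yead}. When $(p_0,q_0)\lneq (p_1,q_1)$, \Cref{le:howcontain} gives $\cL^{p_1,q_1}\not\subseteq \cL^{p_0,q_0}$, and \Cref{re:salinas} transfers this to the approximable versions $\cL_0^{p_1,q_1}\not\subseteq \cL_0^{p_0,q_0}$ (all the inclusion conditions there being equivalent). Both ideals being approximable, \Cref{th:yead} yields $\mu_{p_1,q_1\mid 0}\not\preceq \mu^*_{p_0,q_0\mid 0}$; combined with $\mu_{p_0,q_0\mid 0}\preceq\mu_{p_1,q_1\mid 0}\preceq\mu^*_{p_1,q_1\mid 0}$ this forces every inequality above to be strict. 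The right-multiplier version follows by taking adjoints: a net converges in $\tensor[_r]{\mu}{_\cI}$ exactly when the adjoint net converges in $\mu_{\cI}$ (as $\cI$ is self-adjoint and $\|x_\cI x\|_{\cI}=\|x^*x_\cI^*\|_{\cI}$), while $\mu^*$ is fixed by $*$; hence $\tensor[_r]{\mu}{_{p_1,q_1\mid 0}}\not\preceq \mu^*_{p_0,q_0\mid 0}$ is equivalent to the left statement just proved, and $\mu^*_{p_1,q_1\mid 0}\npreceq\mu^*_{p_0,q_0\mid 0}$ follows a fortiori from $\mu^*\succeq\mu$.

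For parts \Cref{item:pqsameonbdd} and \Cref{item:pqsamefunc} I would specialize \Cref{th:topssamebdd}, using that every $\cL_0^{p,q}$ is approximable by construction. Part \Cref{item:topssamebdd-all} there gives that all $\mu_{p,q\mid 0}$ agree on bounded sets with $\mu_2$; its adjoint-symmetric form gives that all $\tensor[_r]{\mu}{_{p,q\mid 0}}$ agree with $\tensor[_r]{\mu}{_2}$; and \Cref{item:topssamebdd-all*} gives that all $\mu^*_{p,q\mid 0}$ agree with $\mu_2^*$ — this is \Cref{item:pqsameonbdd}, while the common norm-boundedness is \Cref{th:topssamebdd} \Cref{item:topssamebdd}. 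For the continuous functionals in \Cref{item:pqsamefunc} I must verify the hypothesis $\cL^1=\cL_0^{p,q}(\cL^1:\cL_0^{p,q})$ of \Cref{th:topssamebdd} \Cref{item:topssamebdd-fn}: since $(1,1)$ is the lexicographic minimum of the range \Cref{eq:pqrng} and $\cL^1=\cL_0^{1,1}$, \Cref{pr:pqareord} gives $\cL^1\preceq_{\cK}\cL_0^{p,q}$, hence $\cL^1\subseteq \cL_0^{p,q}(\cL^1:\cL_0^{p,q})$; the reverse inclusion is automatic from the definition of the ideal quotient, so equality holds and all three families share the dual $\cL^1\cong \cB_*$ (the right case again by adjoint symmetry).

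The argument is almost entirely organizational, so I do not expect a genuine obstacle; the two points demanding care are the ``transfer'' steps. The first — which I regard as the crux — is the passage via \Cref{re:salinas} from $\cL^{p_1,q_1}\not\subseteq\cL^{p_0,q_0}$ (large ideals, where \Cref{le:howcontain} is phrased) to $\cL_0^{p_1,q_1}\not\subseteq\cL_0^{p_0,q_0}$ (approximable ideals, which \Cref{th:yead} requires). The second is the adjoint symmetry exporting every left-multiplier assertion to its right-multiplier counterpart; one should verify once that $*$ is a bijection intertwining $\mu_{\cI}$ with $\tensor[_r]{\mu}{_\cI}$ and fixing each $\mu^*_{\cJ}$, after which all three families are handled uniformly.
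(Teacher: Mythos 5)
Your proposal is correct and follows the same route as the paper, which likewise assembles the theorem from \Cref{pr:pqareord}, \Cref{pr:ifprod}, \Cref{th:topssamebdd} and \Cref{th:yead}. The extra details you supply --- the transfer of non-inclusion to the approximable ideals via \Cref{re:salinas} and the adjoint symmetry handling the right-multiplier topologies --- are exactly the points the paper leaves implicit, and you fill them in correctly.
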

\begin{proof}
  Parts \Cref{item:pqsameonbdd} and \Cref{item:pqsamefunc} follow from \Cref{th:topssamebdd} and \Cref{pr:pqareord}, which ensures that the former's hypotheses are satisfied.

  As for part \Cref{item:strord}, the (possibly non-strict) ordering follows from \Cref{pr:ifprod} and once more \Cref{pr:pqareord}, whereas the negative assertion is a consequence of \Cref{th:yead}.
\end{proof}

\addcontentsline{toc}{section}{References}

\def\polhk#1{\setbox0=\hbox{#1}{\ooalign{\hidewidth
  \lower1.5ex\hbox{`}\hidewidth\crcr\unhbox0}}}
  \def\polhk#1{\setbox0=\hbox{#1}{\ooalign{\hidewidth
  \lower1.5ex\hbox{`}\hidewidth\crcr\unhbox0}}}
  \def\polhk#1{\setbox0=\hbox{#1}{\ooalign{\hidewidth
  \lower1.5ex\hbox{`}\hidewidth\crcr\unhbox0}}}
  \def\polhk#1{\setbox0=\hbox{#1}{\ooalign{\hidewidth
  \lower1.5ex\hbox{`}\hidewidth\crcr\unhbox0}}}
  \def\polhk#1{\setbox0=\hbox{#1}{\ooalign{\hidewidth
  \lower1.5ex\hbox{`}\hidewidth\crcr\unhbox0}}}

\Addresses

\end{document}